\documentclass[]{article}

\usepackage[utf8]{inputenc}
\usepackage[caption=false]{subfig}

\usepackage[numbers,sort&compress]{natbib}
\usepackage{graphics,graphicx,epsfig,float}
\usepackage{amscd}
\usepackage{anyfontsize}
\usepackage{amsmath}
\usepackage{amsthm}
\usepackage{amssymb}
\usepackage{authblk}
\bibpunct[, ]{[}{]}{,}{n}{,}{,}

\theoremstyle{plain}
\newtheorem{theorem}{Theorem}[section]
\newtheorem{lemma}[theorem]{Lemma}
\newtheorem{corollary}[theorem]{Corollary}
\newtheorem{proposition}[theorem]{Proposition}
\newtheorem{remark}[theorem]{Remark}

\theoremstyle{definition}
\newtheorem{definition}[theorem]{Definition}
\newtheorem{example}[theorem]{Example}

\DeclareMathOperator{\diam}{diam}
\DeclareMathOperator{\id}{id}
\DeclareMathOperator{\Int}{Int}

\begin{document}

\title{Chaotic behavior of countable products of homeomorphism groups}


\author[1]{N.~I. Zhukova\thanks{CONTACT: N.~I. Zhukova. Email: nina.i.zhukova@yandex.ru, nzhukova@hse.ru}}
\author[2]{A.~G. Korotkov}
\affil[1]{HSE University,

ul. Bolshaja Pecherskaja, 25/12, 603155 Nizhny Novgorod, Russia}
\affil[2]{Lobachevsky State University of Nizhny Novgorod, Gagarin ave., 23, build. 2, 603022 Nizhny Novgorod, Russia}
\date{}

\maketitle

\begin{abstract} Relationships between a chaotic behavior and closely related properties of topological 
transitivity, sensitivity to initial conditions, density of closed orbits of homeomorphism groups and 
their countable products are investigated. We construct numerous new examples of chaotic groups of homeomorphisms 
of countable products of various metrizable topological spaces, including infinite-dimensional
topological manifolds, whose factors can be as noncompact surfaces, so triangulable closed manifolds 
of an arbitrary dimension.
\end{abstract}


\vskip10mm
\section{Introduction}
\noindent

In theory of chaos, the chaotic behavior of infinite products of transformation groups has not been investigated.
The reason is that, by definition, the chaotic behavior of a group of homeomorphisms assumes the density 
of the set of finite orbits (see, for example, \cite{CarDEKP}, \cite{CarK}, \cite{NSan}). This requirement may not be  
fulfilled when moving to the infinite product of spaces. Following \cite{BGZ}, we give a more 
general definition of the chaotic behavior of groups of homeomorphisms by weakening the above condition by 
requiring the density of the set of closed orbits (Definition~\ref{Chaos2}). Here by a closed orbit we mean an orbit which
is a closed subset of the respective topological space. This allows us to investigate the chaotic behavior of arbitrary infinite
products of homeomorphism groups. 

\subsection{Devaney's chaos}
Let $T: X\to X$ be a continuous map of metric space. The family $\{T^n\}_{n \in \mathbb N}$ denoted by the pair 
$(X, T)$ is called a dynamical system. Devaney \cite{Devaney} proposed the following notion of chaos, which is usually 
called Devaney's chaos.

\begin{definition}\label{Dev} A dynamical system $(X,T)$ is called {\it chaotic in the sense of Devaney} if it satisfies the
following three properties:
\begin{enumerate}
	\item[(1)] $(X,T)$ is topologically transitive;
  \item[(2)] the set of periodic points of $(X,T)$ is dense in $X$;
  \item[(3)] $(X,T)$ has sensitive dependence on initial conditions.
\end{enumerate}
\end{definition}

Sensitive dependence on initial conditions is widely understood as being the central idea of chaos. In \cite{BBC} it was
shown that in Devaney's definition of chaos, the sensitive dependence follows from transitivity and density of periodic orbits. 
It was found in \cite{Assaf} that neither transitivity nor density of periodic trajectories are deducible from the remaining 
two conditions.

In \cite{CarDEKP} G. Cairns, G. Davis, D. Elton, A. Kolganova and P. Perversi introduced the following notion of a chaotic group 
action as a generalization of chaotic dynamical systems (Definition~\ref{Chaos1}). They showed that, if a group $G$ acts chaotically 
on a compact Hausdorff space, then $G$ is residually finite. Moreover, the reverse is also true, i.e. for every residually finite group 
$G$ there exists a Hausdorff space on which $G$ acts chaotically. As in \cite{CarDEKP}, we don't
assume any topology on the group $G,$ but we assume that each element of $G$ acts on a topological space $X$ as a 
homeomorphism of $X$, and the set $X$ is infinite. All group actions are assumed to be faithful, 
i.e. the only element of a group $G$ which acts as identity homeomorphism is the neutral element in $G$.

\begin{definition}\label{Chaos1} A group of homeomorphisms $G$ of a Hausdorff topological space $X$ is called 
chaotic if the following two conditions are met:
\begin{enumerate}
\item[(1)] {\it topological transitivity}: for every pair of nonempty open subsets $U$ and $V$ in $X$, there exists an element $g\in G$
such that $g(U)\cap V\neq\varnothing$;

\item[(2)] {\it finite orbits dense}: the union of finite orbits is dense in $X$.
\end{enumerate}
\end{definition}

Following \cite{BGZ}, we give and use in this work a different definition of the chaotic behavior of an arbitrary 
homeomorphism group $G$.

\begin{definition}\label{Chaos2} A group of homeomorphisms $G$ of a topological space $X$ is called {\it chaotic} 
(or $G$ has a chaotic behavior) on $X$ if the following two conditions are met:
\begin{enumerate}
\item[(1)] there exists a dense non-closed orbit of the group $G$ in $X$ ({\it the existence of a dense orbit});

\item[(2)] the union of closed orbits is dense in $X$ ({\it the density of closed orbits}).
\end{enumerate}
\end{definition}

Note that Definition~\ref{Chaos2} is more general than Definition~\ref{Chaos1} in the class of $T_1$-spaces.
Emphasize that in the case when $G$ is a countable homeomorphism group of a metrizable
compact space $X$, Definitions~\ref{Chaos1} and \ref{Chaos2} are equivalent (Proposition~\ref{Birkhoff} and Lemma~\ref{LNon}).

If $(X,d)$ is a metric space, then we define the notion of a sensitive dependence of a homeomorphism group $G$ on 
initial conditions (in Section~\ref{S6}).

\subsection{The organization of this work. Main results}

For the convenience of the reader, we provide the basic notions in Sections~\ref{S2} and \ref{S5}.

Let $A$ be any set and let $X_\alpha$, $\alpha \in A$, be any topological spaces. We prove that the direct product of groups 
$G = \prod_{\alpha\in A}G_\alpha$ is topologically transitive on the Tychonoff product 
of topological spaces $X = \prod_{\alpha \in A}X_\alpha$ if and only if every homeomorphism group $G_\alpha$, $\alpha\in A,$ is topologically 
transitive on the respective factor $X_\alpha$ (Theorem~\ref{Trans}). The analogous statement is proved for the existence of dense orbits
(Theorem~\ref{Trans1}). We get also an analog of the Birkhoff theorem (Proposition~\ref{Birkhoff}).

We investigate density of closed orbits in Section~\ref{Sec4} and show that the direct product of groups 
$G = \prod_{\alpha \in A}G_\alpha$ has a dense subset of closed orbits in $X = \prod_{\alpha \in A}X_\alpha$ if and only if 
for every $\alpha \in A$, the group $G_\alpha$ has a dense subset of closed orbits in $X_\alpha$ (Theorem~\ref{Closed}).

In Section \ref{S5} we recall the definition of the product of a countable family of metric spaces.

Section \ref{S6} is devoted to the sensitive dependence of group actions on initial conditions. Recall that a topological space 
$X$ is a Baire space if every countable intersection of open dense subsets of $X$ is dense in $X$ \cite[Def. 8.2]{Kech}. A topological 
space $X$ is referred to a completely metrizable space, if it admits an agreed complete metric \cite[Def.~3.1]{Kech}. According to the 
Baire category theorem, every completely metrizable space is a Baire space. Recall that a separable space homeomorphic to a complete 
metric space is referred to as a Polish space. Consequently Polish spaces and, in particular, compact metric spaces are Baire spaces. A 
Hausdorff topological space is called locally compact if every its point has an open neighborhood with the compact closure. Emphasize 
that compact topological spaces as well as topological manifolds are locally compact. We prove the following theorem on sufficient 
conditions for the sensitivity of homeomorphism groups.

\begin{theorem}\label{Sensation2} Let $(X, d)$ be a locally compact metric Baire space. If a homeomorphism group $G$ of $X$ 
satisfies the following two conditions: 
\begin{enumerate}
\item[(1)] there exists a dense non-closed orbit of the group $G$ in $X$ ({\it the existence of a dense orbit});
\item[(2)] the union of minimal sets of $G$ is a proper dense subset of $X$ ({\it the density of minimal sets}),
\end{enumerate}
then $G$ is sensitive to initial conditions. Moreover, $G$ is sensitive in every metric space $(X, \rho)$ such that $\rho$ and $d$ 
define the same metric topology on $X$.
\end{theorem}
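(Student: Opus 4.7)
\medskip

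\noindent\textbf{Proof plan.} My strategy is to adapt the Banks--Brooks--Cairns--Davis--Stacey argument (which shows that topological transitivity together with density of periodic orbits implies sensitivity) to the group-action setting, with minimal sets of $G$ playing the role of periodic orbits.

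The first step is to extract a uniform sensitivity constant. Since the union of minimal sets of $G$ is dense and proper in $X$, and a closed dense set equals $X$, at least two distinct minimal sets must exist; being minimal closed invariant sets they are automatically disjoint. Using local compactness, I would refine these to two disjoint nonempty compact $G$-invariant subsets $K_1, K_2 \subseteq X$: when the minimal sets are compact this is immediate, and otherwise the plan is to descend to minimal compact $G$-invariant pieces inside compact neighbourhoods by applying Zorn's lemma to the poset of nonempty compact $G$-invariant subsets, with the Baire property and properness of the union of minimal sets ensuring that two such pieces can be chosen disjoint. Compactness then gives $c := d(K_1, K_2) > 0$, and I set $\delta_0 := c/8$.

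The combinatorial heart of the argument runs as in BBC. For any $x \in X$ and any open neighbourhood $U$ of $x$, the triangle inequality forces $d(x, K_i) \ge 4\delta_0$ for at least one $i$; call this set $K$ and fix a reference point $q \in K$. Shrink $U$ to $V := U \cap B(x, \delta_0)$, which is therefore disjoint from $K$. Using density of minimal sets, pick $p \in V$ in some minimal set $N$; since $N$ is minimal and meets $V \subseteq X \setminus K$, the intersection $N \cap K$ is a proper closed $G$-invariant subset of $N$ and hence empty, so $gp \in N$ stays bounded away from $K$ for every $g \in G$. By the dense-orbit hypothesis and hence topological transitivity (Theorem~\ref{Trans1}), find $z \in V$ and $g \in G$ with $gz \in B(q, \delta_0)$. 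A BBC-style triangle-inequality shuffle, combining $d(x, p) < \delta_0$, $d(x, q) \ge 4\delta_0$, $d(gz, q) < \delta_0$, and the $G$-invariance of $K$ and $N$, then yields the dichotomy: either $d(gx, gp) > \delta_0$ (take $y := p$) or $d(gx, gz) > \delta_0$ (take $y := z$), establishing sensitivity with constant $\delta_0$.

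The ``moreover'' clause follows because the only metric-dependent input of the construction is the positive separation $c = d(K_1, K_2)$, and any two disjoint compact subsets of a metrizable space have positive distance in every compatible metric; the whole argument thus reruns with any topologically equivalent $\rho$ in place of $d$, yielding a (generally different) positive sensitivity constant $\delta_\rho := \rho(K_1, K_2)/8$.

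The main obstacle I expect is the first step: extracting two disjoint compact $G$-invariant sets from density and properness of minimal sets in a setting where the minimal sets themselves need not be compact and admit no proper nonempty invariant subsets. This is where local compactness and the Baire property must enter essentially, through Zorn's lemma applied inside compact neighbourhoods together with a separation argument arranging disjointness. The combinatorial second step, once $\delta_0$ is in hand, is then a relatively mechanical adaptation of the classical BBC triangle shuffle.
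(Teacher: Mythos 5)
Your plan has two genuine gaps, and the first one is fatal to the whole strategy. You need two disjoint nonempty compact $G$-invariant sets $K_1,K_2$ to launch the BBC argument, but the hypotheses of Theorem~\ref{Sensation2} do not provide them. A minimal set of a group action on a non-compact space need not be compact (e.g.\ a closed discrete orbit of $\mathbb Z$ acting by translations on $\mathbb R$), and your proposed repair --- Zorn's lemma on compact $G$-invariant subsets of a compact neighbourhood --- cannot get off the ground because a compact neighbourhood need not contain \emph{any} nonempty $G$-invariant set: invariant sets are unions of orbits, and orbits may escape every compact set. Worse, every nonempty compact invariant set contains a compact minimal set (Zorn plus the finite intersection property), so if no minimal set of $G$ is compact then $X$ has no nonempty compact invariant subsets at all, and your $K_1,K_2$ simply do not exist. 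This is exactly the point the paper flags when contrasting Theorem~\ref{Sensation2} with Kontorovich--Megrelishvili \cite{KM}, who prove sensitivity only under the \emph{additional} assumption that the dense union of minimal sets consists of compact ones; your argument silently reintroduces that assumption.

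The second gap is in the triangle shuffle itself. In BBC the periodic point $p$ is moved by the chosen iterate back to $p$ itself, hence stays in $B(x,\delta_0)$, which is what forces $d(g.p,g.z)\geq 2\delta_0$. Replacing the periodic orbit by a minimal set $N$ through $p$ only tells you $g.p\in N$ and $N\cap K=\varnothing$; the separation $d(N,K)$ is positive (when $K$ is compact) but depends on $N$, hence on $x$ and $U$, and can be far smaller than $\delta_0=d(K_1,K_2)/8$. So the inequalities you list ($d(x,p)<\delta_0$, $d(x,q)\geq 4\delta_0$, $d(g.z,q)<\delta_0$) do not yield the dichotomy: $g.p$ may land arbitrarily close to $K$ and to $g.z$. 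The paper avoids both problems by arguing in the opposite direction: assuming non-sensitivity, it uses Proposition~\ref{PP} and Theorem~\ref{TS1} to produce an equicontinuous point $x$ with dense orbit, then uses local compactness to extract, for an arbitrary $v\in X$, limit points of $\{g_k.z\}$ inside the minimal set $\overline{G.z}$ and pulls them back into the equicontinuity neighbourhood; this shows every orbit closure contains $\overline{G.x}=X$, i.e.\ $X$ is minimal, contradicting the \emph{properness} of the union of minimal sets. If you want to salvage a BBC-style route you would have to add compactness of the minimal sets as a hypothesis and replace ``$g.p\in N$'' by a genuine uniform-return (almost periodicity) argument as in \cite{KM}.
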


For continuous actions of topological $C$-semigroups $S$ on a metric space $(X, d)$ under the additional assumption
of compactness of minimal sets whose union is everywhere dense in $X$, sensitivity of $S$ was proved by Kontorovich and
Megrelishvili~\cite{KM}.

Recall that an $n$-dimensional topological manifold is a Hausdorff topological space with a countable base, locally homeomorphic to $\mathbb R^n.$
As topological manifolds are locally compact Polish spaces, the results of Theorem~\ref{Sensation2} are applicable to them. 
 
The following important statement is a direct corollary from Theorem~\ref{Sensation2}, which is represented as a theorem because of 
the importance. Note that it is a generalization of the main result of \cite{BBC} in the case of invertible dynamical systems. 

\begin{theorem}\label{Sens1} Let $(X, d)$ be a locally compact metric Baire space. 
If a homeomorphism group $G$ acts chaotically on $X$ in sense of Definition~\ref{Chaos2}, then the group $G$ is sensitive to initial conditions.
\end{theorem}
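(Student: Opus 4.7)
The strategy is to derive Theorem~\ref{Sens1} directly from Theorem~\ref{Sensation2}: condition~(1) appears verbatim in both statements, so it suffices to show that under Definition~\ref{Chaos2} the union of minimal sets of $G$ is a proper dense subset of $X$.

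First I would note that every closed orbit $Gx$ is a minimal set. Indeed, if $F$ is a nonempty closed $G$-invariant subset of $Gx$ and $y = g_0 x \in F$, then $Gx = Gg_0 x = Gy \subset F$, forcing $F = Gx$. Consequently the union of closed orbits is contained in the union of minimal sets, and density of the former (Definition~\ref{Chaos2}(2)) immediately yields density of the latter. This step is essentially automatic.

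The main work is to prove that the union of minimal sets is a \emph{proper} subset of $X$. Let $x_0$ be a point whose orbit $Gx_0$ is dense but not closed, as provided by Definition~\ref{Chaos2}(1). If $x_0$ belonged to some minimal set $M$, then $Gx_0 \subset M$ and, since $M$ is closed, $X = \overline{Gx_0} \subset M$, so $M = X$; in other words, $X$ itself would be minimal. But then every orbit would be dense, so any closed orbit $Gy$ would satisfy $Gy = \overline{Gy} = X$. This leaves two options, both untenable under chaoticity: either no closed orbit exists at all, in which case the union in Definition~\ref{Chaos2}(2) is empty and cannot be dense in the infinite space $X$; or some orbit equals $X$, in which case $Gx_0 = X$ is closed, contradicting the choice of $x_0$. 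Hence $x_0$ avoids every minimal set, and the union of minimal sets is a proper dense subset of $X$.

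Both hypotheses of Theorem~\ref{Sensation2} are then met and the sensitivity of $G$ follows. The only nontrivial point in the argument is the properness step, and this is exactly where the non-closedness part of Definition~\ref{Chaos2}(1) is essential: without it, one could not exclude the pathological case that $X$ itself is minimal, as occurs for example with an irrational rotation of the circle, where every orbit is dense and no orbit is closed.
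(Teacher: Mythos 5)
Your reduction is correct, and it is exactly the route the paper's introduction advertises when it calls Theorem~\ref{Sens1} ``a direct corollary from Theorem~\ref{Sensation2}''; however, it is not the proof the paper actually writes out. The paper's own proof is self-contained: assuming $G$ is not sensitive, it takes an insensitive point $x$ (Proposition~\ref{PP}) and splits into two cases. If $\overline{G.x}\neq X$ it runs the same ``push a nearby point with dense orbit toward a far point $a$'' estimate as in Theorem~\ref{TS1} to contradict insensitivity; if $\overline{G.x}=X$ it uses density of closed orbits to produce closed orbits $G.y_n$ with $\sup_{g\in G}d(g.x,g.y_n)\le\delta_n\to 0$, deduces $X=\overline{\bigcup_{n}G.y_n}$, and derives a contradiction from the Baire property because each closed orbit is nowhere dense (a closed orbit with interior would absorb the dense non-closed orbit). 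Your argument instead verifies the two hypotheses of Theorem~\ref{Sensation2}: density of the union of minimal sets is immediate once one observes that every closed orbit is a minimal set, and the genuinely non-automatic point --- properness --- you handle correctly by showing that the point $x_0$ with dense non-closed orbit lies in no minimal set, since otherwise $X$ would be minimal and then either no closed orbit exists (so condition (2) of Definition~\ref{Chaos2} fails) or the unique orbit is closed (so condition (1) fails). What your approach buys is brevity and the explicit justification of the properness claim that the paper's ``direct corollary'' remark silently presupposes; what the paper's direct proof buys is independence from the heavier machinery of Theorem~\ref{Sensation2} (in particular, neither of its two cases actually needs local compactness, only the metric and the Baire property). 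Both are legitimate; yours is the cleaner derivation given that Theorem~\ref{Sensation2} is already available.
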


Let $G_i$, $i \in J \subset\mathbb N$, be a homeomorphism group of a metric space $X_i$, and on the Tychonoff 
product $X = \prod_{i\in J} X_i$ the canonical action of the direct product of groups $G = \prod_{i\in J} G_i$ is given. We prove that, 
in contrast to the transitivity and density of closed orbits, in order for the canonical action of the group $G$ on the product $X$ 
to be sensitive to initial conditions, it is sufficient to have one group $G_n$, $n\in J,$ which is sensitive to initial conditions on $X_n$ 
(Theorem~\ref{Prod_sens}). In the case when the index set $J$ is finite, this condition is also necessary (Theorem~\ref{Prod_sens2}).

In Section \ref{S7} we prove the following theorem. 

\begin{theorem}\label{ThC1} For every set $A$ of indexes, let $G_\alpha$, $\alpha\in A,$ be a homeomorphism group of a topological space 
$X_\alpha$, and on the Tychonoff product $X = \prod_{\alpha\in A} X_\alpha$ the canonical action of the product of groups 
$G = \prod_{\alpha\in A} G_\alpha$ is given. Then the group $G$ acts chaotically on $X$ 
if and only if every group $G_\alpha$, $\alpha \in A$, acts chaotically on $X_\alpha$.
\end{theorem}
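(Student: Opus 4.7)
The plan is to reduce the theorem to the earlier results, namely Theorem~\ref{Trans1} (existence of a dense orbit) and Theorem~\ref{Closed} (density of closed orbits), since Definition~\ref{Chaos2} decomposes exactly into these two ingredients, modulo the requirement that the dense orbit be non-closed. The structural identity driving the argument is that, under the coordinate-wise action of $G=\prod_{\alpha\in A}G_\alpha$ on $X=\prod_{\alpha\in A}X_\alpha$, the orbit through a point $x=(x_\alpha)_{\alpha\in A}$ factors as $G\cdot x=\prod_{\alpha\in A}G_\alpha\cdot x_\alpha$, and in the Tychonoff topology closures commute with arbitrary products: $\overline{\prod_{\alpha}A_\alpha}=\prod_{\alpha}\overline{A_\alpha}$.

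For the sufficiency direction, I would assume each $G_\alpha$ is chaotic on $X_\alpha$. Density of closed orbits of $G$ on $X$ is immediate from Theorem~\ref{Closed}. For condition~(1) of Definition~\ref{Chaos2}, for each $\alpha$ I would select $x_\alpha\in X_\alpha$ with $G_\alpha\cdot x_\alpha$ dense and non-closed in $X_\alpha$, set $x=(x_\alpha)_{\alpha\in A}$, and verify that $\overline{G\cdot x}=\prod_\alpha\overline{G_\alpha\cdot x_\alpha}=\prod_\alpha X_\alpha=X$, while $G_\alpha\cdot x_\alpha\subsetneq X_\alpha$ for every $\alpha$ forces $G\cdot x\subsetneq X$. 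Hence $G\cdot x$ is a dense non-closed orbit.

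For the necessity direction, I would assume $G$ acts chaotically on $X$. Density of closed orbits of each $G_\alpha$ follows directly from Theorem~\ref{Closed}. Existence of a dense orbit of $G_\alpha$ on $X_\alpha$ follows from Theorem~\ref{Trans1}: the coordinate projection $\pi_\alpha(G\cdot x)=G_\alpha\cdot\pi_\alpha(x)$ of a dense orbit of $G$ is a dense orbit of $G_\alpha$, since $\pi_\alpha$ is continuous and surjective. The remaining point is to guarantee that, on each factor, this dense orbit can be taken non-closed. I would pick a base point $x$ realizing a dense non-closed orbit of $G$, choose $y\in X\setminus G\cdot x$, and use the factorization $G\cdot x=\prod_\alpha G_\alpha\cdot x_\alpha$ to read off the indices at which $\pi_\alpha(y)\notin G_\alpha\cdot\pi_\alpha(x)$; on the remaining indices I would invoke the density of closed orbits on $X_\alpha$ together with Theorem~\ref{Trans1} to replace $x_\alpha$ by a base point yielding a proper dense orbit in $X_\alpha$.

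The main obstacle is the last step of the necessity direction: ensuring that the dense orbit on each factor can be chosen non-closed, since a priori the projection of a dense non-closed orbit of $G$ may coincide with some entire $X_\alpha$. Resolving this will require the combined use of Theorem~\ref{Trans1} and Theorem~\ref{Closed} to select, factor by factor, base points whose orbits are simultaneously dense and proper subsets of $X_\alpha$.
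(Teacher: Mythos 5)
Your overall route is the same as the paper's: decompose Definition~\ref{Chaos2} into its two clauses, and feed them into Theorem~\ref{Trans1} (dense orbits) and Theorem~\ref{Closed} (density of closed orbits) via the identities $G.x=\prod_\alpha G_\alpha.x_\alpha$ and $\overline{\prod_\alpha B_\alpha}=\prod_\alpha\overline{B_\alpha}$. Your sufficiency direction is complete and correct, and in fact more careful than the paper's own write-up: you explicitly check that the product orbit is non-closed by observing that each factor orbit is a proper subset of its factor.

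The gap is exactly where you flag it, and the repair you sketch does not close it. In the necessity direction you must produce, for \emph{every} $\alpha$, a dense \emph{non-closed} orbit of $G_\alpha$; for the indices where $G_\alpha.x_\alpha=X_\alpha$ you propose to replace $x_\alpha$ by a point with a proper dense orbit, invoking Theorem~\ref{Trans1} and the density of closed orbits. Neither of these supplies such a point: if $G_\alpha$ acts transitively on $X_\alpha$ (e.g.\ $\mathbb Q$ acting on itself by translations), then $G_\alpha$ has a dense orbit and a dense union of closed orbits, yet its only orbit is all of $X_\alpha$ and hence closed, so no proper dense orbit exists to replace $x_\alpha$ with. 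Moreover, the product of such a factor with a genuinely chaotic factor still has a dense non-closed orbit and a dense union of closed orbits, so the hypothesis of the necessity direction is met while the conclusion fails for that factor; the "only if" implication, read literally with the non-closedness requirement of clause $(1)$ of Definition~\ref{Chaos2}, therefore cannot be proved without an additional hypothesis excluding transitive factors (or without reading clause $(1)$ as mere existence of a dense orbit, as the parenthetical name suggests). For what it is worth, the paper's proof passes over this point silently: it cites Theorem~\ref{Trans1}, which concerns only dense orbits, as if it delivered dense non-closed orbits on the factors. So the obstacle you isolated is genuine and is not resolved in the source either; everything else in your argument is sound.
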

For a countable index set $A$ we prove Theorems~\ref{T1} and \ref{T2} on relationship between sensitivity of groups $G_\alpha,$ 
$\alpha\in A$, and $G$.

The application to compact metrizable spaces are considered (Section \ref{S7_2}).
In particular, we prove the following theorem.

\begin{theorem}\label{TC2} Let $G_i$, $i\in \mathbb N$, be a countable group of homeomorphisms of a metrizable compact
space $X_i$. Assume that every $G_i$ acts chaotically on $X_i$. Then:
\begin{enumerate}
	\item[(1)] the canonical action of the product of groups $G = \prod_{i\in \mathbb N} G_i$ is chaotic on the Tychonoff product 
  $X = \prod_{i\in \mathbb N} X_i$;
  \item[(2)] exists a dense subset $F \subset X$ which is the union of continual compact orbits, and every such orbit is a perfect subset of $X$;
  \item[(3)] exists a dense continuum orbit of the group $G$ in $X$;
	\item[(4)] all groups $G_i$, $i\in \mathbb N$, and $G$ are  residually finite; 
  \item[(5)] all groups $G_i$, $i\in \mathbb N$, and $G$ are sensitive to initial conditions (respectively every metric metrizing $X_i$);
	\item[(6)] if each group $G_i$ has a fixed point, then the union of the finite orbits of group $G$ is dense in $X$, and $G$ has a fixed point.
\end{enumerate}
\end{theorem}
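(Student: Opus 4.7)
The plan is to dispose of items (1), (4), (5), (6) by direct appeal to earlier results, and to argue items (2) and (3) by a coordinatewise construction.

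Item (1) is Theorem~\ref{ThC1} applied to the countable index set $A=\mathbb{N}$. For (4), since each $X_i$ is a compact metrizable space and $G_i$ is countable, Proposition~\ref{Birkhoff} and Lemma~\ref{LNon} show that chaoticity of $G_i$ in the sense of Definition~\ref{Chaos2} is equivalent to chaoticity in the sense of Definition~\ref{Chaos1}, so residual finiteness of each $G_i$ follows from the Cairns--Davis--Elton--Kolganova--Perversi theorem. This property lifts to $G=\prod G_i$: given $g=(g_i)\neq e$, choose $i_0$ with $g_{i_0}\neq e$ and a finite-index normal $N_{i_0}\triangleleft G_{i_0}$ missing $g_{i_0}$; then $N_{i_0}\times\prod_{i\neq i_0}G_i$ is a finite-index normal subgroup of $G$ missing $g$. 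For (5), each $X_i$ and the countable product $X$ (compact metrizable by Tychonoff) are locally compact metric Baire spaces, so Theorem~\ref{Sens1} applies to each $G_i$ and, via (1), to $G$. For (6), if $p_i$ is fixed by $G_i$ then $(p_i)$ is fixed by $G$; since finite orbits of each $G_i$ are dense in $X_i$ (by the equivalence above), for any basic open $U=U_1\times\cdots\times U_m\times\prod_{i>m}X_i$ we take $x_i\in U_i$ with $|G_i x_i|<\infty$ for $i\leq m$ and $x_i=p_i$ otherwise, yielding a finite $G$-orbit $\prod_i G_i x_i$ in $U$.

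The substantive content is (2) and (3), and the key auxiliary step is the following claim: the closed $G_i$-orbits of cardinality at least two are dense in $X_i$. If not, there is a nonempty open $U_i\subset X_i$ in which every closed $G_i$-orbit is a singleton, so $\mathrm{Fix}(G_i)\cap U_i$ is dense in $U_i$ and consequently $\mathrm{Fix}(g)\supset\overline{U_i}$ for every $g\in G_i$. If $\overline{U_i}=X_i$ then faithfulness forces $G_i=\{e\}$, contradicting the existence of a dense non-closed orbit in the infinite space $X_i$. Otherwise choose a nonempty open $V_i\subset X_i\setminus\overline{U_i}$; the topological transitivity of $G_i$ (which follows from the existence of a dense orbit) produces $g\in G_i$ with $g(U_i)\cap V_i\neq\varnothing$, contradicting $g|_{\overline{U_i}}=\mathrm{id}$.

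Using the claim, for (2) let $F$ be the union of all orbits of the form $\prod_i O_i$ with each $O_i$ a closed $G_i$-orbit of cardinality at least two. For any basic open $U=U_1\times\cdots\times U_m\times\prod_{i>m}X_i$, the claim yields such $O_i\subset U_i$ for $i\leq m$ and any such $O_i$ for $i>m$, producing an orbit $\prod_i O_i\subset U$ inside $F$, so $F$ is dense. Each $\prod_i O_i$ is compact by Tychonoff, has cardinality $\mathfrak{c}$ because $2\leq|O_i|\leq|G_i|\leq\aleph_0$ over countably many factors, and is perfect in $X$ since no point can be isolated when infinitely many factors contain at least two points. For (3), Theorem~\ref{Trans1} furnishes a dense $G$-orbit $G\cdot z=\prod_i G_i z_i$ with each $G_i z_i$ dense in $X_i$; as $X_i$ is infinite Hausdorff, each such dense set is infinite, so $|G\cdot z|=\aleph_0^{\aleph_0}=\mathfrak{c}$. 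The main obstacle is the density claim for closed orbits of cardinality $\geq 2$; the remaining steps amount to lifting coordinatewise chaos via the product-of-orbits identity $G\cdot(x_i)=\prod_i G_i x_i$ and the Tychonoff theorem.
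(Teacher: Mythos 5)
Your proposal is correct and follows essentially the same route as the paper: items (1), (4), (5), (6) by citation of Theorems~\ref{ThC1}, \ref{ThC3}/\ref{Sens1} and the product formula for orbits, and items (2)--(3) via the same key lemma that the closed orbits with at least two points are dense in each $X_i$ (your contradiction through $\mathrm{Fix}(G_i)$ and transitivity is a minor variant of the paper's observation that $G_i|_{U}=\id_U$ kills the dense orbit, and your perfectness argument via the product structure replaces the paper's homogeneity argument). One cosmetic slip: in the density argument for $F$ you cannot demand $O_i\subset U_i$, only $O_i\cap U_i\neq\varnothing$, which is all that is needed for $F\cap U\neq\varnothing$.
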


Since every topological manifold is a locally compact Polish space, then all results of our work for locally compact Polish spaces are 
applicable to topological manifolds.

The Sections \ref{S8}--\ref{S9} contain the construction of families of homeomorphism groups of various topological spaces.

In Section \ref{S8} we check chaoticity of the group generated by the full $N$-shift of the space of bi-infinite sequences $\Sigma^N$ of $N$ symbols. 
This allows us to get series of new chaotic groups of homeomorphisms of different finite and infinite products of spaces $\Sigma^{N_i}$, $N_i\in\mathbb N$. 
Emphasize, that the space $\Sigma^N$ is homeomorphic to the $(2N - 1)$-ary Cantor set.

In Section \ref{S9} we construct numerous examples of chaotic homeomorphism groups of topological manifolds including noncompact manifolds. 
Using the method from \cite{CarDEKP}, we construct a countable series of examples of chaotic groups of homeomorphisms, isomorphic to the group 
$\mathbb Z$, on every closed surface as well as on various noncompact surfaces, examples of which are shown in Figures~\ref{Loch_ness_monster} 
and \ref{Cantor_tree}.
\begin{figure}[H]
\centerline{\includegraphics[width=0.6\columnwidth]{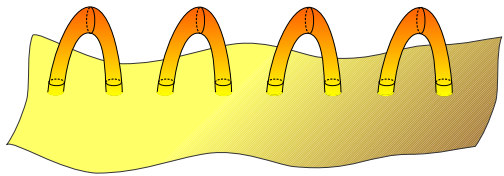}}
\caption{The Loch Ness monster.}
\label{Loch_ness_monster}
\end{figure}
Emphasize that all examples of chaotic group of homeomorphisms on noncompact topological manifolds are new and they are represented for the first time.
\begin{figure}[H]
\centerline{\includegraphics[width=0.6\columnwidth]{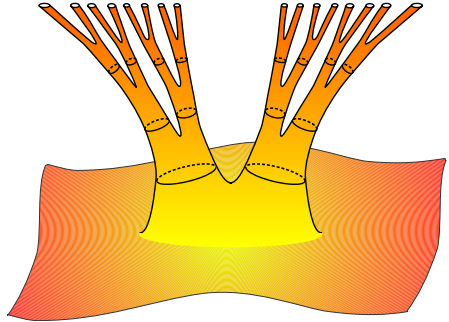}}
\caption{The surface homeomorphic to the plane without the Cantor set.}
\label{Cantor_tree}
\end{figure}
We use the obtained chaotic actions as building blocks for constructions of chaotic actions of homeomorphism groups on Tychonoff 
products of topological manifolds. Due to the result of G. Cairns and 
A. Kolganova \cite{CarK} and Theorem~\ref{ThC1}, every triangulable closed manifold of arbitrary dimension can be taken as a factor on which 
arbitrary countably generated free groups acts chaotically. In particular, we get a continuum set of examples of chaotic actions of homeomorphism 
groups on infinite dimensional topological manifolds.

\vspace{3mm}
{\bf Notations.}
If a group $G$ acts on a set $X$, we denote by $g.x$ the action of an element $g \in G$ on a point $x \in X$. By $G.x$ we denote the orbit of $x$ 
with respect to $G$.

We use the notations $D_r(x) = \{y \in X|\,\, d(x, y) < r\}$ for the open ball of a radius $r$ with the center at $x$ and 
$D_\varepsilon(B) = \{y \in X|\,\, d(y, B) < \varepsilon\} = \bigcup_{b \in B} D_\varepsilon(b)$ for the $\varepsilon$-neighborhood 
of a subset $B$ in a metric space $(X, d)$.

\vspace{3mm}
{\bf Assumptions.} Inclusions do not exclude equality. All neighborhoods are assumed to be open. By a countable set we mean an infinite 
countable set as well as a finite set.

\section{The canonical action of the direct product of groups}\label{S2}
\subsection{The Tychonoff  product of topological spaces}
Let $A$ be an arbitrary set, let $\{X_\alpha\,|\,\alpha\in A\}$ be a family of any sets. The direct (Cartesian) product $X = \prod_{\alpha\in A}X_\alpha$ 
is the set of all maps $x: A\to\bigcup_{\alpha\in A}X_\alpha$ such that $x(\alpha) \in X_\alpha$ for any $\alpha\in A$. If $x\in X$, then the point 
$x(\alpha)\in X_\alpha$ is denoted by the symbol $x_\alpha$ and is called the $\alpha$-coordinate of the element $x$. The symbol $\{x_\alpha\}$ will denote 
the point of the product $X$, $\alpha$-coordinate of which is the point $x_\alpha \in X_\alpha$.

Let $\{(X_\alpha, \tau_\alpha)\,|\,\alpha\in A\}$ be a family of topological spaces. Assume that $X = \prod_{\alpha\in A} X_\alpha$ is provided by the weakest
topology $\tau$ such that all projections $\pi_\alpha: \prod_{\alpha\in A} X_\alpha\to X_\alpha$, $\pi_\alpha(\{x_\alpha\}) := x_\alpha$, are continuous. This topology $\tau$ is 
called by the {\it Tychonoff } topology. Note that $$\zeta = \{\pi_\beta^{-1}(U)\subset X\,|\, U\in\tau_\beta, \beta\in A\}$$ is a subbase of $\tau$. 
The base of the Tychonoff topology formed by all possible finite intersections of subsets of $\zeta$, is called the {\it canonical} base.

The topological space $(X, \tau)$ is called the {\it Tychonoff  product of topological spaces}
$(X_\alpha, \tau_\alpha)$ and is denoted by $(X, \tau) = \prod_{\alpha\in A}(X_\alpha, \tau_\alpha)$.

\subsection{A direct product of groups and its canonical action}
Let $G_\alpha$, $\alpha\in A,$ be a family of groups. On the direct product of sets $G = \prod_{\alpha\in A} G_\alpha$, the group operation is 
introduced as follows
$$\Psi: G\times G\to G, \,\,\,\,\Psi(\{g_\alpha\}, \{h_\alpha\}) = \{g_\alpha\cdot h_\alpha\}\,\,\,\,  
\forall(\{g_\alpha\}, \{h_\alpha\})\in G\times G,$$
where $g_\alpha\cdot h_\alpha$ is the product of elements $g_\alpha$ and $h_\alpha$ in $G_\alpha$. The group $G = \prod_{\alpha \in A} G_\alpha$ 
is referred to as a direct product of groups $G_\alpha$, $\alpha\in A$.

Let $X = \prod_{\alpha\in A} X_\alpha$ be a direct product of sets. Assume that for every $\alpha\in A$ a group $G_\alpha$ acts on $X_\alpha$.
Consider the direct product of groups  $G =\prod_{\alpha\in A} G_\alpha$. Then the following action of the group $G$ on $X$ is defined
$$\Phi: G\times X\to G, \,\,\,\,\Phi(\{g_\alpha\}, \{x_\alpha\}) = \{g_\alpha.x_\alpha\}\,\,\,\, \forall(\{g_\alpha\}, \{x_\alpha\})\in G\times X.$$
We call this action the {\it canonical action of the direct product of groups $G =\prod_{\alpha\in A} G_\alpha$ on the direct product of sets
 $X = \prod_{\alpha\in A} X_\alpha$}.

\vspace{3mm}
Further in this work we assume that the direct product of groups $G =\prod_{\alpha \in A} G_\alpha$ acts on $X = \prod_{\alpha\in A} X_\alpha$
canonically.

\section{Transitivity of the canonical product of homeomorphism groups}\label{S3}
\begin{definition}\label{Tr} A homeomorphism group $G$ of a topological space $X$ is called topologically transitive on $X$
if for every nonempty open subsets $U$ and $V$ in $X$ there exists such an element $g\in G$ that $$g(U)\cap V \neq\varnothing.$$
\end{definition}

\begin{theorem}\label{Trans} Let $A$ be an arbitrary set of indexes. For every $\alpha\in A$, a homeomorphism group $G_\alpha$ of the 
topological space $X_\alpha$ is topologically transitive on $X_\alpha$ if and only if the direct product of groups $G = \prod_{\alpha\in A}G_\alpha$
topologically transitive on the Tychonoff product of topological spaces $X = \prod_{\alpha \in A}X_\alpha$.
\end{theorem}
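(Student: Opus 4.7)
The proof is really a clean book-keeping argument via the canonical base of the Tychonoff topology, so I would present it as two short implications.

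For the sufficiency direction, assume each $G_\alpha$ is topologically transitive on $X_\alpha$, and let $U, V \subset X$ be nonempty open. Since the canonical base consists of finite intersections of cylinder sets $\pi_\alpha^{-1}(W_\alpha)$, I may shrink $U$ and $V$ to basic open subsets of the form $U' = \prod_{\alpha} U_\alpha$ and $V' = \prod_\alpha V_\alpha$ with $U_\alpha = V_\alpha = X_\alpha$ outside a finite index set $F \subset A$, and with $U_\alpha, V_\alpha$ nonempty open in $X_\alpha$ for $\alpha \in F$. For each $\alpha \in F$, transitivity of $G_\alpha$ yields $g_\alpha \in G_\alpha$ with $g_\alpha(U_\alpha) \cap V_\alpha \neq \varnothing$; for $\alpha \notin F$, take $g_\alpha$ to be the neutral element of $G_\alpha$. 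The element $g = \{g_\alpha\} \in G$ then satisfies $g(U') = \prod_\alpha g_\alpha(U_\alpha)$, and intersecting this product with $V'$ coordinate by coordinate gives $g(U') \cap V' \neq \varnothing$, hence $g(U) \cap V \neq \varnothing$.

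For the necessity direction, assume $G$ is topologically transitive on $X$. Fix an index $\beta \in A$ and nonempty open subsets $U_\beta, V_\beta \subset X_\beta$. The cylinders $U := \pi_\beta^{-1}(U_\beta)$ and $V := \pi_\beta^{-1}(V_\beta)$ are nonempty open in $X$ (nonemptiness uses that every $X_\alpha$ is nonempty, which is implicit in the statement). By assumption, there exists $g = \{g_\alpha\} \in G$ with $g(U) \cap V \neq \varnothing$. Since $g$ acts coordinatewise and $U, V$ are unrestricted outside the $\beta$-coordinate, a direct computation yields
\[
g(U) \cap V \;=\; \bigl(g_\beta(U_\beta) \cap V_\beta\bigr) \times \prod_{\alpha \neq \beta} X_\alpha,
\]
so $g_\beta(U_\beta) \cap V_\beta \neq \varnothing$ and $G_\beta$ is topologically transitive on $X_\beta$.

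The two directions together give the theorem, and neither direction presents a genuine obstacle: the only technical point worth underlining in the write-up is that in the forward direction one must pass to the canonical base rather than an arbitrary subbasic open set, so that only finitely many coordinates are constrained and only finitely many applications of the hypothesis are required. Once that reduction is in place, the canonical coordinatewise action of $G$ matches the product structure of basic open sets perfectly, and both implications become one-line computations.
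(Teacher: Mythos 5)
Your proof is correct and follows essentially the same route as the paper: reduce to canonical basic open sets in the forward direction, choose $g_\alpha$ coordinatewise (the paper invokes transitivity of $G_\alpha$ for every $\alpha$, which is trivially satisfiable where $U_\alpha=V_\alpha=X_\alpha$, whereas you take the neutral element there — an immaterial difference), and use cylinders $\pi_\beta^{-1}(U_\beta)$, $\pi_\beta^{-1}(V_\beta)$ for the converse. No gaps; nothing further to add.
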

\begin{proof} Suppose that for every $\alpha\in A$, the group of homeomorphisms $G_\alpha$ of the topological space $X_\alpha$ is topologically transitive
on $X_\alpha$. Let $X = \prod_{\alpha \in A}X_\alpha$ be the Tychonoff product of topological spaces $X_\alpha$. Show that for every nonempty open
subsets $U$ and $V$ in $X$ there exists an element $g = \{g_\alpha\} \in G$ such that $g(U) \cap V \neq \varnothing$. It is enough to prove this 
fact for every sets $U$ and $V$ from the canonical base of the Tychonoff topology on $X$. Let $U = \prod_{\alpha \in A}U_\alpha$, 
$V = \prod_{\alpha \in A}V_\alpha$ where $U_\alpha$ and $V_\alpha$ are nonempty open subsets in $X_\alpha$, and for some finite subsets
$A_1, A_2\subset A$ the equalities are fulfilled
$$U_\alpha = X_\alpha \,\,\, \forall \alpha \in A \setminus A_1, \,\,\,\,\,\, V_\beta= X_\beta\,\,\, \forall \beta \in A\setminus A_2.$$
The topological transitivity of $G_\alpha$ on $X_\alpha$ implies the existence of elements $g_\alpha \in G_\alpha$ satisfying
$g_\alpha (U_\alpha) \cap V_\alpha \neq \varnothing$ for all $\alpha \in A$. Put $g = \{g_\alpha\}\in G,$ then
$g(U) \cap V = \prod_{\alpha \in A}(g_\alpha(U_\alpha) \cap V_\alpha)\neq\varnothing.$ 
Thus, the group $G$ is topologically transitive on  $X$.

The opposite. Suppose that the homeomorphism group $G =\prod_{\alpha\in A}G_\alpha$ is topologically transitive on the Tychonoff product 
$X = \prod_{\alpha \in A}X_\alpha$. Fix an arbitrary element $\delta\in A.$ Let $U_\delta$ and $V_\delta$ be any nonempty open subsets in 
$X_\delta$. Let $U := \pi^{-1}(U_\delta)$ and $V := \pi^{-1}(V_\delta)$, hence $U$ and $V$ are nonempty open subsets in $X$. Since the group 
$G$ acts topologically transitive on $X$, then there is an element $g = \{g_\alpha\} \in G$ such that 
$\varnothing \neq g(U) \cap V = \prod_{\alpha \in A}(g_\alpha(U_\alpha) \cap V_\alpha)$, therefore
$g_\delta(U_\delta) \cap V_\delta \neq \varnothing.$ This implies topological transitivity of the group $G_\delta$ on $X_\delta$ for every 
$\delta \in A$.
\end{proof}

\begin{theorem}\label{Trans1} Let $A$ be an arbitrary index set and let $X = \prod_{\alpha \in A}X_\alpha$ be the Tychonoff product of 
topological spaces $X_\alpha$. Assume that $G_\alpha$ is a homeomorphism group of $X_\alpha$. Then the direct product of groups 
$G = \prod_{\alpha \in A}G_\alpha$ has a dense orbit in $X$ if and only if the group $G_\alpha$ has a dense orbit in $X_\alpha$
for every $\alpha \in A.$
\end{theorem}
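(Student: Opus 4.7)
The plan is to exploit the fact that, under the canonical action, the orbit of a point $x = \{x_\alpha\}$ factorizes as the product of its coordinate orbits, and then invoke the standard density fact for Tychonoff products (any product of dense subsets is dense, since each canonical basic set only constrains finitely many coordinates).

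For the forward direction, suppose $\{x_\alpha\} \in X$ has a dense $G$-orbit. Fix an arbitrary $\delta \in A$ and any nonempty open $U_\delta \subset X_\delta$. The preimage $\pi_\delta^{-1}(U_\delta)$ is a nonempty open subset of $X$, so density of $G.\{x_\alpha\}$ yields some $g = \{g_\alpha\} \in G$ with $g.\{x_\alpha\} \in \pi_\delta^{-1}(U_\delta)$, that is, $g_\delta.x_\delta \in U_\delta$. Hence $G_\delta.x_\delta$ meets every nonempty open set in $X_\delta$, proving density of this orbit. This argument is essentially the same projection trick already used in the proof of Theorem~\ref{Trans}.

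For the backward direction, for each $\alpha \in A$ choose $x_\alpha \in X_\alpha$ whose $G_\alpha$-orbit is dense in $X_\alpha$, and set $x = \{x_\alpha\} \in X$. From the definition of the canonical action,
\[
G.x = \{\{g_\alpha.x_\alpha\} : \{g_\alpha\} \in G\} = \prod_{\alpha \in A} G_\alpha.x_\alpha,
\]
because the coordinates $g_\alpha \in G_\alpha$ can be chosen independently. To check density, take any nonempty set $V = \prod_{\alpha \in A} V_\alpha$ from the canonical base of the Tychonoff topology, so there is a finite subset $A_0 \subset A$ with $V_\alpha$ a nonempty open subset of $X_\alpha$ for $\alpha \in A_0$ and $V_\alpha = X_\alpha$ otherwise. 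Density of each $G_\alpha.x_\alpha$ in $X_\alpha$ gives $G_\alpha.x_\alpha \cap V_\alpha \neq \varnothing$ for every $\alpha \in A_0$, while for the remaining indices the intersection equals $G_\alpha.x_\alpha \neq \varnothing$. Choosing one element in each $G_\alpha.x_\alpha \cap V_\alpha$ produces a point of $G.x \cap V$, and density follows.

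There is no real obstacle here: the key conceptual point is recognizing that the canonical action makes the coordinates of an orbit wholly independent, so no coordinated choice of the $g_\alpha$'s is ever required, and the argument goes through uniformly for $A$ of arbitrary cardinality without appeal to any separability or countability assumption.
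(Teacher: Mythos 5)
Your proof is correct and follows essentially the same route as the paper: both rest on the factorization $G.x=\prod_{\alpha\in A}G_\alpha.x_\alpha$ together with the fact that a product of dense subsets is dense in the Tychonoff product. The only difference is presentational --- the paper invokes the closure formula $\overline{\prod_{\alpha\in A}B_\alpha}=\prod_{\alpha\in A}\overline{B_\alpha}$ from Engelking, whereas you verify the same density fact directly via the canonical base and the projection trick.
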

\begin{proof} As known \cite[Prop. 2.3.3]{Eng}, for every family of subsets $B_\alpha\subset X_\alpha$ in the product
$X = \prod_{\alpha \in A}X_\alpha$ the closures satisfy the following relation
\begin{equation}\label{Cl}
\overline{\prod_{\alpha \in A}B_\alpha} = \prod_{\alpha \in A}\overline{B_\alpha}.
\end{equation}
Since the orbit $G.x$ of $x = \{x_\alpha\}\in X$ is equal to the product of orbits $G_\alpha.x_\alpha$, i.e.,  
$G.x = \prod_{\alpha \in A}G_\alpha.x_\alpha$, then taking into account \eqref{Cl} we get a chain of equalities
$\overline{G.x} = \overline{\prod_{\alpha \in A}G_\alpha.x_\alpha} = \prod_{\alpha \in A}\overline{G_\alpha.x_\alpha}$. Therefore
\begin{equation}\label{CL2}
\overline{G.x} = \prod_{\alpha \in A}\overline{G_\alpha.x_\alpha}\,\,\, \,\,\,\,\,\,\forall x = \{x_\alpha\}\in X = \prod_{\alpha \in A}X_\alpha.
\end{equation}
Using \eqref{CL2} it is easy to obtain a statement of the theorem being proved.
\end{proof}

{\bf An analog of the Birkhoff transitivity theorem}

\vspace{2mm}
If the action of the group $G$ on the space $X$ has a dense orbit, then it is topologically transitive. Indeed, let
$U$ and $V$ are any open nonempty subsets of $X$ and let $\overline{G.x} = X$ for some $x\in X$. Then there are elements
$g_1, g_2\in G$ such that $g_1.x\in U$ and $g_2.x\in V$. It follows that $g.U \cap V \neq \varnothing$ where $g = g_2 g_1^{-1}\in G$.

According to \cite[Prop. 1]{CairnsKolgN}, if the group $G$ is topologically transitive on a Baire space $X$ with a countable base, 
then there exists a point $x\in X$ with a dense orbit. Therefore we get the following analog 
of the Birkhoff theorem for homeomorphism groups of Baire spaces with a countable base. 

\begin{proposition}\label{Birkhoff} If $G$ is a homeomorphism group of a topological space $X$, then the existence of a dense orbit
of $G$ implies topological transitivity of $G$.

When $X$ is a Baire space with a countable base, the converse is also true.
\end{proposition}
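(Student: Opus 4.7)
The first implication is essentially already observed in the paragraph immediately preceding the proposition: if $\overline{G.x}=X$, then for any nonempty open $U,V\subset X$ we can pick $g_1,g_2\in G$ with $g_1.x\in U$ and $g_2.x\in V$, and then $g:=g_2g_1^{-1}$ satisfies $g.U\cap V\neq\varnothing$. So I would just restate this short argument formally as the easy direction, noting that it requires no hypothesis on $X$ beyond the group action.

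For the converse, assume $X$ is a Baire space with a countable base and $G$ is topologically transitive. My plan is the standard Birkhoff-type argument. Fix a countable base $\{U_n\}_{n\in\mathbb N}$ of $X$. For each $n$ set
\[
W_n \;:=\; \bigcup_{g\in G} g(U_n).
\]
Since every $g\in G$ is a homeomorphism of $X$, each $g(U_n)$ is open, and hence $W_n$ is open. I would then show $W_n$ is dense: given any nonempty open $V\subset X$, topological transitivity yields some $g\in G$ with $g(U_n)\cap V\neq\varnothing$, which forces $W_n\cap V\neq\varnothing$.

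Applying the Baire property to the countable family of open dense sets $\{W_n\}$, the intersection $W:=\bigcap_{n\in\mathbb N}W_n$ is dense in $X$, in particular nonempty. I would then pick any $x\in W$ and verify that $G.x$ is dense. For each $n$, the relation $x\in W_n$ gives some $g\in G$ and some $y\in U_n$ with $x=g.y$, so $g^{-1}.x=y\in U_n$, i.e.\ $G.x\cap U_n\neq\varnothing$. Since $\{U_n\}$ is a base, $G.x$ meets every nonempty open subset of $X$, so $\overline{G.x}=X$.

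The argument is routine; there is no real obstacle, only two mild points to be careful about. First, one must use that each $g\in G$ is a \emph{homeomorphism} (not merely a continuous map) to ensure $g(U_n)$ is open, which is why the statement is phrased for homeomorphism groups. Second, the hypothesis ``Baire space with a countable base'' is used in exactly one place, namely the application of the Baire category theorem to the countable family $\{W_n\}$; without a countable base one cannot reduce denseness of the orbit to a countable intersection condition, and without Baireness one cannot conclude $W\neq\varnothing$.
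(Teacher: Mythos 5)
Your proof is correct. The easy direction is word-for-word the argument the paper gives in the paragraph preceding the proposition, and for the converse the paper simply cites \cite[Prop.~1]{CairnsKolgN}; your Baire-category argument with the open dense sets $W_n=\bigcup_{g\in G}g(U_n)$ is precisely the standard proof behind that citation (just discard any empty members of the countable base so that transitivity applies).
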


In particular, for Polish spaces $X$, the existence of a dense orbit of $G$ on $X$ is equivalent to topological transitivity of $G$.

\begin{example}
Let $\{e_k\}_{k = \overline{1, n}}$, be a basis of the vector space $\mathbb R^n$ for $n\geq 1$. Define homeomorphisms 
of the Euclidean space $\mathbb R^n$ by the following equalities: $g_k(x) = x + e_k$, for $k = \overline{1, n}$, and $g_{n+1}(x) = \lambda x$ where $\lambda > 1$ 
for all $x\in\mathbb R^n$. Consider the homeomorphism group $G = \langle g_k\,|\, k = \overline{1, n+1} \rangle$. According to \cite[Prop. 16]{ZhM}, the 
homeomorphism group $G$ is topologically transitive, and every its orbit is dense in $\mathbb R^n$. In other words, $\mathbb R^n$ is a 
minimal set of the group $G.$ Note that there is no transitive subgroup of $G$ with the number of generators less than $n+1$.
\end{example}

\section{Density of closed orbits}\label{Sec4}

\begin{theorem}\label{Closed} Let $A$ be an arbitrary index set and let $X = \prod_{\alpha \in A}X_\alpha$ be the Tychonoff product of topological 
spaces $X_\alpha$. Assume that $G_\alpha$ is a homeomorphism group of $X_\alpha$. Then the direct product of groups 
$G = \prod_{\alpha \in A}G_\alpha$ has a dense union of closed orbits in $X$ if and only if for every $\alpha \in A$, 
the group $G_\alpha$ has a dense union of closed orbits in the topological space $X_\alpha$.
\end{theorem}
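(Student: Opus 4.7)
The plan is to reduce the theorem to the following pointwise fact: for $x = \{x_\alpha\} \in X$, the orbit $G.x$ is closed in $X$ if and only if $G_\alpha.x_\alpha$ is closed in $X_\alpha$ for every $\alpha \in A$. This follows immediately from the identity $G.x = \prod_{\alpha \in A} G_\alpha.x_\alpha$ combined with the closure formula $\overline{G.x} = \prod_{\alpha \in A} \overline{G_\alpha.x_\alpha}$ already established in \eqref{CL2}; since each orbit $G_\alpha.x_\alpha$ is a nonempty subset of its closure, the equality of those two products is equivalent to the coordinatewise equalities $G_\alpha.x_\alpha = \overline{G_\alpha.x_\alpha}$.

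For the necessity direction, I would fix an arbitrary $\delta \in A$ and a nonempty open set $U_\delta \subset X_\delta$, and observe that $\pi_\delta^{-1}(U_\delta)$ is nonempty and open in $X$. By hypothesis this set contains a point $x$ whose $G$-orbit is closed in $X$; by the pointwise fact, $G_\delta.x_\delta$ is closed in $X_\delta$, and $x_\delta \in U_\delta$. Hence the union of closed orbits of $G_\delta$ meets every nonempty open set of $X_\delta$, proving density.

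For the sufficiency direction, I would show that every nonempty element of the canonical base of the Tychonoff topology meets a closed orbit of $G$. Write such a basic open set as $W = \prod_{\alpha \in A} W_\alpha$ where $W_\alpha = X_\alpha$ for every index outside some finite set $A_1 \subset A$. For each $\alpha \in A_1$, the density hypothesis yields $y_\alpha \in W_\alpha$ with $G_\alpha.y_\alpha$ closed. For each $\alpha \notin A_1$, I only need some point with closed $G_\alpha$-orbit, and such a point exists because a dense subset of the nonempty space $X_\alpha$ is nonempty. Assembling $y = \{y_\alpha\} \in W$, the pointwise fact gives that $G.y$ is closed in $X$, completing the proof.

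The heart of the argument is the pointwise equivalence extracted from \eqref{CL2}; once that is in hand, the two directions are essentially bookkeeping. The only mild subtlety is that in the sufficiency direction one has to produce closed-orbit representatives in the factors for which $W$ imposes no constraint, which invokes the axiom of choice over $A$ together with the harmless observation that density of closed orbits in each $X_\alpha$ forces at least one such orbit to exist.
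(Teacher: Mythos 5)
Your proof is correct and follows essentially the same route as the paper: both rest on the pointwise fact that $G.x$ is closed iff every $G_\alpha.x_\alpha$ is closed, derived from $G.x=\prod_\alpha G_\alpha.x_\alpha$ and \eqref{CL2}. The only cosmetic difference is that the paper concludes by applying the product-closure formula \eqref{Cl} to the union $B=\prod_\alpha B_\alpha$ of closed orbits, whereas you inline that formula by testing density directly against the canonical base.
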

\begin{proof} Let $x = \{x_\alpha\}\in X$, then $G.x = \prod_{\alpha \in A}G_\alpha.x_\alpha$. According to
\eqref{CL2}, we get
\begin{equation}\label{Closure}
\overline{G.x} = G.x\,\,\Longleftrightarrow\,\, \overline{G_\alpha.x_\alpha} = G_\alpha.x_\alpha\,\,\, \forall \alpha\in A.
\end{equation}
This means that an orbit $G.x$ is closed in $X$ if and only if the orbit $G_\alpha.x_\alpha$  is closed in $X_\alpha$ 
for every $\alpha\in A.$ Let $B$ be the union of all closed orbits of $G$ in $X$. Denote by $B_\alpha$ the union of all closed orbits of 
$G_\alpha$ in $X_\alpha.$ Therefore $B = \prod_{\alpha \in A}B_\alpha$. Suppose that $B$ is dense in $X$, hence applying the equality 
\eqref{Cl}, we get the following chain of equalities
 $X = \overline{B} = \overline{\prod_{\alpha \in A}B_\alpha} = \prod_{\alpha \in A}\overline{B_\alpha}.$ Consequently 
$X_\alpha = \overline{B_\alpha}$, i.e. $B_\alpha$ is dense in $X_\alpha$.

Conversely, let for every $\alpha\in A$ the subset of $B_\alpha$ be dense in $X_\alpha$. As $B = \prod_{\alpha \in A}B_\alpha$, applying the 
equality \eqref{Cl}, we have $\overline{B} = \prod_{\alpha \in A}\overline{B_\alpha} = \prod_{\alpha \in A}X_{\alpha} = X.$
This means that the union of all closed orbits of $G$ is dense in $X$.
\end{proof}
 
\section{Countable products of metric spaces}\label{S5}

\subsection{The direct product of a countable family of metric spaces}
{\bf The direct product of two metric spaces.}
Let $(X_1, d_1)$ and $(X_2, d_2)$ be metric spaces. A metric $d$ on the product of two metric spaces 
$X_1 \times X_2 = \{(x_1, x_2)\,|\, {x_1 \in X_1,} {x_2 \in X_2}\}$ may be introduced in the following ways (\cite[Sec. 4.2]{Deza}):
\begin{enumerate}
	\item $d(x, y) = \sqrt[p]{d_1^p(x_1, y_1) + d_2^p(x_2, y_2)}$ where $p \geq 1$.
  \item $d(x, y) = \max\{d_1(x_1, y_1), d_2(x_2, y_2)\}$.
  \item $d(x, y) = \mathop \sum \limits_{i = 1}^2 \frac{1}{2^i} \frac{d_i(x_i, y_i)}{1 + d_i(x_i, y_i)}$.
\end{enumerate}

All these methods can be easily extended to the case of the product of any finite number of factors.

{\bf The direct product of a countable family of metric spaces.}
Based on any metric $d$ on the set $X$, we can get a metric bounded by the number $1,$ by the formula
\begin{equation} \label{restrict_metrics}
\widetilde{d}(x, y) = \frac{d(x, y)}{1 + d(x, y)}\,\,\,\, \forall x, y\in X,
\end{equation} 
and metric topologies defined by the metrics $d$ and $\widetilde{d}$ are coincided.

Let $(X_i, d_i)$, $i\in\mathbb{N}$, be a countable family of metric spaces. Metric on the product
$X = \prod_{i\in\mathbb{N}} X_i$ can be given by the equality (\cite[Th. 4.2.2]{Eng}):
\begin{equation} \label{coutable_prod_metrics}
d(x, y) = \mathop \sum \limits_{i = 1}^\infty \frac{1}{2^i} \widetilde{d}_i(x_i, y_i),
\end{equation}
where $\widetilde{d}_i$ is defined by the formula \eqref{restrict_metrics} if the metric $d_i$ is not bounded by a number $1$,
otherwise $\widetilde{d}_i = d_i$.

Emphasize that the topology on $X$ generated by the metric $d$, defined by \eqref{coutable_prod_metrics}, coincides with the Tychonoff topology of the product
$X = \prod_{i\in\mathbb{N}} X_i$ of topological spaces $X_i$.

\begin{definition} The metric $d$ on $X = \prod_{i\in\mathbb{N}} X_i$ given by the formula \eqref{coutable_prod_metrics},
is called by the {\it direct product} of metrics $\widetilde d_i$. The metric
space $(X, d)$ is called the {\it direct product} of countable family of metric spaces $(X_i, \widetilde d_i)$ and it is
denoted by $(X, d) = \prod_{i\in\mathbb{N}} (X_i, \widetilde d_i)$.
\end{definition}

\begin{remark} As it is known \cite{Deza}, the metric \eqref{coutable_prod_metrics} is a special case of the metric
$d(x,y) = \mathop \sum \limits_{i = 1}^\infty A_i \widetilde{d}_i(x_i, y_i)$, where the series $\mathop \sum \limits_{i = 1}^\infty A_i$
converges and all of its members are positive.
\end{remark}

\subsection{Nonmetrizability of the product of an uncountable family of topological spaces}
Let $(M,d)$ be a metric space and $D_{r}(x) = \{z\in M\,|\,d(x,z) < r\}$ be the ball of radius $r > 0$ centered at $x$.
Recall that a topological space satisfies the first axiom of countability if it has a countable base of topology at each point.
Every metric space $(M,d)$ has a countable base $\Sigma_x = \{D_{1/n}(x)\,|\,n\in\mathbb N\}$ at each point $x\in M.$
According to \cite[Cor. 4.2.4]{Eng}, an uncountable product of metrizable spaces such that every of them contains at least two points, 
does not satisfy the first axiom of countability. Thus an uncountable product of such spaces is not metrizable.

Therefore, we will consider further only products of a countable family of metric spaces in investigations of sensitivity of homeomorphism groups.

\section{The sensitivity of homeomorphism groups to initial conditions}\label{S6}

\subsection{Properties of sensitivity}
\begin{definition}\label{Chu1} A homeomorphism group $G$ of a metric space $(X, d)$ is called {\it sensitive to initial
conditions at a point} (or, for short, {\it sensitive at a point}) $x\in X,$ if there exists a number $\delta = \delta(x) > 0$ such 
that for every neighborhood $U_x$ of $x$ there exists an element $g\in G$ satisfying the following inequality:
\begin{equation}
\diam(g.U_x) \geq \delta.
\end{equation}
A group $G$ is called {\it pointwise sensitive to initial conditions} (or, for short, {\it pointwise sensitive}), if it is sensitive at every point $x\in X.$
\end{definition}

\begin{definition}\label{Chu2} A homeomorphism group $G$ of a metric space $(X, d)$ is called {\it sensitive to initial
conditions} (or, for short, {\it sensitive})  if there exists a number $\delta > 0$ such that for each open subset $U\subset X$ 
there exists an element $g\in G$ satisfying the following inequality:
\begin{equation}
\diam(g.U) \geq \delta.
\end{equation}
The number $\delta$  is referred to as {\it the sensitivity constant} for $G$.
\end{definition}

Emphasize that every group $G$ satisfying Definition~\ref{Chu2}, satisfies also Definition~\ref{Chu1} of the pointwise sensitivity. 
Note that without violating generality, in Definitions~\ref{Chu1} and \ref{Chu2} we may consider only neighborhoods from the base of the metric topology, 
that is, neighborhoods of the form $D_\varepsilon(x)$, $x\in X$, $\varepsilon > 0.$ 

\begin{example}\label{E1} Recall that a homeomorphism group $G$ of a metric space $(X, d)$ is said to be {\it expansive} on $X$, 
if there exists a constant $c > 0$ such that for every $x \neq y$ in $X,$ there is $g\in G$ satisfying $d(g.x, g.y) > c$.
Such a constant $c$ is called an {\it expansivity constant} of this group \cite{Bar}.
Every expansive group $G$ of homeomorphisms of a metric space $(X, d)$ is sensitive to initial conditions, and the role of $\delta$
in Definition~\ref{Chu2} plays the expansivity constant $c.$
\end{example}

\begin{proposition}\label{PP} Let $G$ be a homeomorphism group of a metric Baire space $X$ having a dense non-closed orbit. 
Then $G$ is pointwise sensitive if and only if it has sensitive dependence on initial conditions.
\end{proposition}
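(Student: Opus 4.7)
The plan is to treat the two directions separately. The direction ``sensitive $\Rightarrow$ pointwise sensitive'' is immediate from the definitions (as noted just after Definition~\ref{Chu2}): a uniform sensitivity constant $\delta$ serves as a local constant $\delta(x) := \delta$ at every $x \in X$, so neither the Baire hypothesis nor the dense orbit is needed here. For the converse I would first localize the sensitivity. Assume $G$ is pointwise sensitive and set
\[
\delta(x) := \sup\{\delta \geq 0 \mid \forall \text{ open nbhd } U \ni x,\ \exists g \in G,\ \diam(g.U) \geq \delta\}.
\]
The set on the right is downward closed, so $\delta(x) > 1/n$ guarantees that some $\delta' > 1/n$ actually witnesses sensitivity at $x$. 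Pointwise sensitivity gives $\delta(x) > 0$ at every point, hence $X = \bigcup_{n \geq 1} B_n$ with $B_n := \{x \in X \mid \delta(x) > 1/n\}$. Since $X$ is a Baire space this union cannot consist entirely of nowhere dense sets, so there exist $n_0$ and a nonempty open $V \subset \overline{B_{n_0}}$.

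Next I would use the dense orbit to spread this local sensitivity to all of $X$. By Proposition~\ref{Birkhoff} the existence of a dense orbit makes $G$ topologically transitive, so for any nonempty open $U \subset X$ there is $h \in G$ with $W := h.U \cap V \neq \varnothing$. Because $V \subset \overline{B_{n_0}}$, the open set $W$ contains a point $z \in B_{n_0}$. Then $W$ is an open neighborhood of $z$ and $\delta(z) > 1/n_0$, so by the downward closedness noted above there is $g' \in G$ with $\diam(g'.W) \geq 1/n_0$. Since $W \subset h.U$, this yields $\diam((g'h).U) \geq 1/n_0$, and hence $\delta := 1/n_0$ is a uniform sensitivity constant for $G$ in the sense of Definition~\ref{Chu2}.

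The main technical nuisance is the quantifier bookkeeping around the supremum defining $\delta(x)$: one has to make sure that the strict inequality in the definition of $B_n$ genuinely delivers a diameter lower bound of $1/n_0$ at points of $B_{n_0}$, rather than only in the limit. Once this is set up correctly, the Baire step and the transitivity spreading step are both routine. I note that the non-closedness of the dense orbit does not seem to enter the argument at all -- only topological transitivity, which is what Proposition~\ref{Birkhoff} extracts from the dense orbit, is actually used. The hypothesis is presumably written in this form to match Definition~\ref{Chaos2} of chaos, where the dense orbit is required to be non-closed.
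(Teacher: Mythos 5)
Your proof is correct, but it follows a different route from the paper's. The paper works with the sets $V_n=\{x\in X\mid \exists\,\varepsilon>0:\ \diam(g.D_{\varepsilon}(x))<1/n\ \forall g\in G\}$ of ``$1/n$-insensitive'' points: these are open, $G$-invariant and nested, so the existence of a dense orbit forces each nonempty $V_n$ to be dense; if all were nonempty, Baire would give a point in $\bigcap_n V_n$, which is insensitive, contradicting pointwise sensitivity. Hence some $V_m=\varnothing$, and $\delta=1/m$ is immediately a uniform sensitivity constant --- no further spreading step is needed. You instead cover $X$ by the sets $B_n$ of points sensitive at scale $>1/n$, extract from Baire only that some $B_{n_0}$ is somewhere dense, and then transport the constant $1/n_0$ to an arbitrary nonempty open $U$ via topological transitivity (using the elementary half of Proposition~\ref{Birkhoff} plus the monotonicity $g'.W\subset (g'h).U$). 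Both arguments use exactly the same two hypotheses (Baire plus a dense orbit, with non-closedness playing no role in either, as you correctly observe); the trade-off is that your version avoids having to check openness and $G$-invariance of the level sets, at the cost of an extra transitivity step, while the paper's version gets the uniform constant in one stroke from the emptiness of $V_m$. Your handling of the supremum $\delta(x)$ and the downward-closedness needed to convert $\delta(z)>1/n_0$ into an actual diameter bound $\geq 1/n_0$ on the neighborhood $W$ is also correct.
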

\begin{proof} As the sensitivity implies pointwise sensitivity, prove the inverse. Let $X$ be a metric Baire space. Assume that a group $G$ 
is pointwise sensitive and has a dense non-closed orbit. For each $n\in\mathbb N$ consider the following subset
\begin{equation}\label{E8}
V_n =\{x\in X\,|\, \exists\, \varepsilon > 0 : \diam(g.D_{\varepsilon}(x)) < 1/n\,\,\, \,\forall g\in G\}.
\end{equation}
Note that $V_n$ is an open $G$-invariant subset in $X$, and $V_n \supset V_{n+1} \supset V_{n+2} \supset ...$ Since every nonempty
open set contains a point with dense orbit, then if $V_n \neq \varnothing$ for some $n$, due to $G$-invariance of $V_n$, the set $V_n$ is dense
in $X$.  Since $X$ is a Baire space, if $V_n \neq\varnothing\,\,\,\forall n\in\mathbb N$, then $\bigcap_{n\in\mathbb N}V_n$ is dense in $X$.
Emphasize that each point $x\in\bigcap_{n\in\mathbb N}V_n$ is not a sensitive point respectively $G$, that contradicts the assumption. Consequently,
there exists $m\in\mathbb N$ for which $V_m = \varnothing$. Therefore, $V_n = \varnothing$ for every $n\geq m.$ According to \eqref{E8}, this 
means that there exists $\delta = 1/m$ such that for every $x\in X$ and for every $\varepsilon > 0$ there exists an element $g\in G$ satisfying 
$\diam(g.D_\varepsilon(x)) \geq \delta$. By Definition~\ref{Chu2}, the group $G$ is sensitive to initial conditions on $X.$ This completes 
the proof.
\end{proof}

\begin{remark}\label{RChu} For a continuous action of a topological group $G$ on a compact metric space $(X, d)$, as indicated by F. Polo
\cite[Prop.~1.3]{Polo} one can prove Proposition~\ref{PP} using ideas from \cite{AAB}. In fact, we have implemented such a possibility 
under a more general assumption, replacing the compactness condition of the metric space $X$ with the assumption that $X$ is a Baire space.
\end{remark}

\begin{lemma}\label{LS1} If $x$ is a sensitive point of a homeomorphism group $G$ of a metric space $(X, d)$ with a sensitive constant 
$\delta = \delta(x)$, then all points of the closure $\overline{G.x}$ of the orbit $G.x$ are also sensitive with the same sensitive 
constant $\delta.$
\end{lemma}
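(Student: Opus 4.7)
The plan is to establish the conclusion in two short steps: first propagate sensitivity from $x$ to every point of the orbit $G.x$ using equivariance, and then propagate it from $G.x$ to $\overline{G.x}$ using the fact that every neighborhood of a limit point is also a neighborhood of a nearby orbit point.

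First I would take any $y \in G.x$, writing $y = h.x$ for some $h\in G$. For an arbitrary neighborhood $U_y$ of $y$, the preimage $h^{-1}.U_y$ is a neighborhood of $x$. By hypothesis there exists $g\in G$ with $\diam(g.(h^{-1}.U_y)) \geq \delta$. Setting $g' := g h^{-1} \in G$ gives $\diam(g'.U_y) \geq \delta$, so $y$ is sensitive with the same constant $\delta$. The only subtlety here is that the diameter is measured with the metric $d$, not with an invariant metric, so I should emphasize that it is the sensitivity inequality that transports via $G$-equivariance, and not the metric itself.

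Next, for arbitrary $y \in \overline{G.x}$, take any neighborhood $U_y$ of $y$. By density of $G.x$ in $\overline{G.x}$, there exists $z \in G.x \cap U_y$. Since $U_y$ is open, it is also a neighborhood of $z$; by the previous step $z$ is sensitive with constant $\delta$, so there exists $g\in G$ with $\diam(g.U_y)\geq \delta$. This is precisely the sensitivity condition for $y$ with constant $\delta$.

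I do not anticipate any serious obstacle; the lemma is essentially a manipulation of the definition combined with $G$-equivariance and density. The main point to be careful about is to state Definition~\ref{Chu1} in the form where $U_y$ ranges over all neighborhoods (not only those from some fixed base), so that the neighborhood $U_y$ used for $y$ may be reused verbatim as a neighborhood of $z$ without any shrinking.
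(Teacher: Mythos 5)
Your proof is correct and follows essentially the same two-step route as the paper: first transport the sensitivity constant along the orbit via $g' = g h^{-1}$ acting on the pulled-back neighborhood, then handle closure points by reusing a neighborhood of the limit point as a neighborhood of a nearby orbit point. Your remark about using the definition with arbitrary neighborhoods (rather than only metric balls) at the second step is exactly the right point of care.
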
 
\begin{proof} As $x$ is a sensitive point, there exists $\delta = \delta(x) > 0$ such that for every neighborhood $U = U_x$ there 
are points $y, z\in U$ and $g\in G$ satisfying $d(g.y, g.z) > \delta(x).$ Pick $x'\in G.x.$ Let $U' = U'_{x'}$ be an arbitrary 
neighborhood at $x'.$ Let $x' = g'.x, \, g'\in G.$ The sensitivity of $x$ implies that in the neighborhood $g'^{-1}(U')$ of $x$ there exist 
points $g'^{-1}(y')$, $g'^{-1}(z')$, $y', z'\in U'$ and there is $\widehat{g}\in G$, satisfying the inequality $(h.y', h.z') > \delta(x)$ for 
$h = \widehat{g}g'^{-1}\in G.$ Consequently $\delta = \delta(x)$ is a sensitive constant for every point of the orbit $G.x.$

For each point $v\in\overline{G.x}$ and for every its neighborhood $V = V(v)$ there is $g\in G$ for which $g.x\in V.$ As $g.x\in G.x$, 
according to the fact proved above, there are points $y, z\in V$ and an element $g'\in G$ such that $d(g'.y, g'.z) > \delta(x).$ This 
means that $\delta(x)$ is a sensitive constant at $v.$ Thus, $\delta = \delta(x)$ is a common sensitive constant for every points from the 
closure $\overline{G.x}$.
\end{proof}

\begin{lemma}\label{LS2} Let $(X, d)$ be a metric space. Assume that a group $G$ of homeomorphisms of $X$ has a dense non-closed
orbit. Then $G$ is sensitive to initial conditions if and only if there exists a sensitive point with dense orbit.
\end{lemma}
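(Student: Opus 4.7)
The proof splits naturally into the two implications, and the forward direction is essentially a definitional observation while the reverse direction is the substantive part.

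For the forward direction, assume $G$ is sensitive to initial conditions in the sense of Definition~\ref{Chu2}, with sensitivity constant $\delta > 0$. Then for every point $x \in X$ and every neighborhood $U_x$ of $x$, applying the definition to the open set $U_x$ yields some $g \in G$ with $\diam(g.U_x) \geq \delta$. Hence every point of $X$ is sensitive in the sense of Definition~\ref{Chu1}, with common constant $\delta$. By hypothesis $G$ admits a dense non-closed orbit, so picking any $x_0$ on such an orbit gives a sensitive point with dense orbit.

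For the reverse direction, suppose $x_0 \in X$ is a sensitive point with dense orbit, and let $\delta_0 = \delta(x_0) > 0$ be its sensitivity constant from Definition~\ref{Chu1}. The density of the orbit means $\overline{G.x_0} = X$, so Lemma~\ref{LS1} upgrades the sensitivity at the single point $x_0$ to pointwise sensitivity at \emph{every} $x \in X$, with the \emph{same} constant $\delta_0$. This uniformity is what makes the argument work: given an arbitrary nonempty open $U \subset X$, pick any $x \in U$. Then $U$ is itself a neighborhood of $x$, so by pointwise sensitivity at $x$ with constant $\delta_0$ there exists $g \in G$ with $\diam(g.U) \geq \delta_0$. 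Since $U$ was arbitrary, $G$ satisfies Definition~\ref{Chu2} with sensitivity constant $\delta = \delta_0$.

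The only nontrivial step is the reverse direction, and its key ingredient is already prepared as Lemma~\ref{LS1}: the fact that the sensitivity constant propagates along an orbit and hence along its closure. Once one has a single sensitive point whose orbit is dense, the closure equals all of $X$ and uniform (set-wise) sensitivity is immediate. I do not foresee any obstacle beyond invoking Lemma~\ref{LS1} correctly; the role of the ``non-closed'' hypothesis is inherited from the surrounding framework but is not actually used in the logical argument itself.
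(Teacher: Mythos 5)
Your proof is correct and takes essentially the same route as the paper: the forward direction is the definitional passage from set-wise to pointwise sensitivity, and the reverse direction invokes Lemma~\ref{LS1} to propagate the constant $\delta(x_0)$ to the closure $\overline{G.x_0}=X$. Your last step (any nonempty open $U$ is a neighborhood of each of its points, so the uniform constant yields Definition~\ref{Chu2}) merely makes explicit what the paper leaves implicit, and your remark that the ``non-closed'' hypothesis is not actually needed is accurate.
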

\begin{proof} According the condition, there exists a point $x$ with a dense orbit $G.x.$ Assume that $G$ is sensitive. Therefore every 
its point is sensitive, hence $x$ is a sensitive point with the dense orbit $G.x.$

Converse, let there exists a sensitive point with dense orbit $G.x$, i.e. $X = \overline{G.x}.$ According to Lemma~\ref{LS1}, 
$\delta = \delta(x)$ is a sensitive constant for $G$.
\end{proof} 

Let us use the terminology, generally accepted in the theory of dynamical systems, for groups of homeomorphisms.

\begin{definition} Let $G$ be a homeomorphism group of a metric space $(X, d)$. An action of $G$ is called {\it minimal}, if
every orbit of $G$ is dense in $X,$ i.e. if $X$ is a minimal set of $G$.

For the group $G$, the term an {\it equicontinuous} point is the synonym of an {\it insensitive} point.  An action of $G$ is 
called {\it equicontinuous} (or $G$ is equicontinuous), if every point $x\in X$ is equicontinuous. An action of $G$ is 
called {\it almost equicontinuous} (or $G$ is almost equicontinuous), if the set of equicontinuous points of $G$ is dense in $X$.
\end{definition}
Introduce the following notations. Let $\cal NS$ be the set of all insensitive points and let $\cal D$ be the set of all points with
dense orbits of $G$ in $X$.  

\begin{theorem}\label{TS1} Let $(X, d)$ be a metric space and let $G$ be a homeomorphism group of $X$. Assume that 
$G$ has a dense non-closed orbit, and $G$ is insensitive. Then 
\begin{equation}\label{EE}
{\cal NS = \cal D}.
\end{equation}

If, moreover, $(X, d)$ is a metric Baire space, then $\cal D$ is a dense $G_\delta$-set, coinciding with $X$ in the case when $G$ is minimal.
\end{theorem}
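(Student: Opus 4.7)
The plan is to establish $\mathcal{NS}=\mathcal{D}$ via two inclusions, and then realize this common set as the intersection of a descending sequence of $G$-invariant open dense subsets, to which the Baire property applies.

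For $\mathcal{D}\subseteq\mathcal{NS}$ I repeat the argument behind Lemma~\ref{LS2}: if some sensitive $x$ had $\overline{G.x}=X$, then Lemma~\ref{LS1} would force every point of $X$ to be sensitive with the common constant $\delta(x)$, and hence $G$ itself would be sensitive in the sense of Definition~\ref{Chu2}, contradicting the hypothesis. For the opposite inclusion $\mathcal{NS}\subseteq\mathcal{D}$, I fix $x\in\mathcal{NS}$, an arbitrary $w\in X$ and $\delta>0$. Equicontinuity at $x$ supplies $\varepsilon>0$ such that $\diam(g.D_\varepsilon(x))<\delta/2$ for every $g\in G$. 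Using the dense-orbit hypothesis, take any $z$ with $\overline{G.z}=X$ and, by density, an element $g_0$ with $y:=g_0.z\in D_\varepsilon(x)$; the orbit $G.y=G.z$ is still dense, so some $g_1$ places $g_1.y$ within $\delta/2$ of $w$. The triangle inequality together with $y\in D_\varepsilon(x)$ then yields $d(g_1.x,w)<\delta$, so $\overline{G.x}=X$ and $x\in\mathcal{D}$.

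For the second assertion I introduce the sets
\[V_n=\{y\in X\mid \exists\,\varepsilon>0,\ \diam(g.D_\varepsilon(y))<1/n\ \text{for every } g\in G\}\]
from the proof of Proposition~\ref{PP}, so that $\mathcal{NS}=\bigcap_n V_n$. A short shrinking-ball argument shows each $V_n$ is open, and the continuity of the elements of $G$ at points of the form $h.y$ makes $V_n$ also $G$-invariant. From the first part and the dense-orbit hypothesis we have $\varnothing\neq\mathcal{D}=\mathcal{NS}\subseteq V_n$, so each $V_n$ is a nonempty $G$-invariant open set containing a dense orbit, and is therefore itself dense in $X$. The Baire property then gives that $\mathcal{D}=\bigcap_n V_n$ is a dense $G_\delta$-subset of $X$. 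When $G$ is minimal every orbit equals $X$, so trivially $\mathcal{D}=X$.

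I expect the only genuinely non-routine step to be $\mathcal{NS}\subseteq\mathcal{D}$, where equicontinuity at $x$ and density of an \emph{unrelated} orbit must be combined through the triangle inequality; all of the remaining work—openness and $G$-invariance of the $V_n$ and the Baire closing argument—is bookkeeping once the representation $\mathcal{NS}=\bigcap_n V_n$ borrowed from Proposition~\ref{PP} is in place.
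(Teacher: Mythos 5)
Your proposal is correct and follows essentially the same route as the paper: the inclusion $\mathcal{D}\subseteq\mathcal{NS}$ via Lemmas~\ref{LS1}--\ref{LS2}, the reverse inclusion by combining equicontinuity at $x$ with a nearby point on a dense orbit through the triangle inequality, and the Baire part via the open $G$-invariant sets $V_n$ with $\mathcal{NS}=\bigcap_n V_n$. The only (cosmetic) difference is that you prove $\mathcal{NS}\subseteq\mathcal{D}$ directly by hitting an arbitrary target $w$, whereas the paper runs the same triangle-inequality estimate as a proof by contradiction against a point $a$ outside $\overline{G.x}$, which lets you skip the paper's separate treatment of the minimal case in the first part; note only that minimality means every orbit is dense in $X$, not literally equal to $X$.
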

\begin{proof} Assume that $G$ is insensitive to initial conditions. As $G$ has a dense non-closed orbit, ${\cal D}\neq\varnothing$, hence, by 
Lemma~\ref{LS2}, the inclusion ${\cal D}\subset\cal NS$ holds.

If $G$ is minimal, then the inclusion ${\cal D}\subset\cal NS$ implies ${\cal D} = X = \cal NS$, hence the equality \eqref{EE} is true.

Let $G$ is non-minimal. Assume that the equality \eqref{EE} is not true. Consequently there is an insensitive point 
$x\in X$ such that its orbit $G.x$ is not dense in $X$. Then $U = X\setminus{\overline{G.x}}$ 
is nonempty open subset in $X$, hence there exists $a\in U$, and $d(a, \overline{G.x}) > 0.$ Put $\delta = \frac{1}{2}d(a, \overline{G.x}).$ 
According the condition, $G$ has a dense non-closed orbit. Therefore for every neighborhood $V_x$ of $x$ there exists a point $y\in V_x$ having a 
dense orbit. Hence there exist $y'\in G.y \cap D_\delta(a)$ and an element $g\in G$ for which $y' = g.y$. Show that $\diam(g.V_x) \geq \delta.$ 
Suppose the opposite, i.e. $\diam(g.V_x) < \delta.$ Using the triangle inequality in the metric space $(X,d)$, we get
$$d(a, g.x) \leq d(a, y') + d(g.y, g.x) < \delta + \delta = 2\delta = d(a, \overline{G.x})$$
which contradicts the definition of distance $d(a, \overline{G.x}).$ The contradiction proves inequality 
$\diam(g.V_x) \geq \delta.$ Therefore, $x$ is sensitive to initial conditions, that contradicts our assumption. 
Hence we have shown the inclusion $\cal NS \subset\cal D$. Thus we proved that ${\cal D} = \cal NS.$ 

Now assume that $(X, d)$ is a metric Baire space and show that $\cal D$ is a $G_\delta$-subset in $X$. As above, by $U_x$ we denote 
a neighborhood of $x$. Consider 
\begin{equation}\label{EE1}
V_n =\{x\in X\,|\, \exists\, U_x : \diam(g.U_x)\leq 1/n\,\,\, \,\forall g\in G\}.
\end{equation}
Note that $V_n$ is an open $G$-invariant subset in $X$ for each $n\in\mathbb N$, and $V_n \supset V_{n+1} \supset V_{n+2} \supset ...$ 
According to Definition~\ref{Chu1}, ${\cal NS}\subset V_n$ for every $n\in\mathbb N$, hence ${\cal NS}\subset\bigcap_{n\in\mathbb N}V_n.$
The inclusion $\bigcap_{n\in\mathbb N}V_n\subset{\cal NS}$ is also true. Thus, ${\cal NS} = \bigcap_{n\in\mathbb N}V_n.$
Since every open set contains a point with dense orbit, and $V_n \neq \varnothing$ for every $n$, due to $G$-invariance of $V_n$, the set $V_n$ 
is dense in $X$. As $X$ is a Baire space, the intersection $\bigcap_{n\in\mathbb N}V_n = {\cal NS = \cal D}$ is a dense $G_\delta$-subset in $X$. 
This completes the proof of the theorem.
\end{proof}

We have the following two direct corollaries from Theorem~\ref{TS1}.
\begin{corollary}\label{CS1} Let $(X, d)$ be a metric Baire space. Let $G$ be a group of homeomorphisms of $X$ having
a dense non-closed orbit. Then 
$${\cal NS}\neq\varnothing \,\,\,\Leftrightarrow\,\,\, \overline{\cal NS} = X.$$
\end{corollary}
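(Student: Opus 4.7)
The plan is to extract this as essentially a one-line consequence of Theorem~\ref{TS1}, so I will just spell out carefully how the two hypotheses of that theorem get verified from $\mathcal{NS}\neq\varnothing$, and then quote its conclusion.

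The reverse implication is immediate: if $\overline{\mathcal{NS}}=X$ and $X$ is nonempty (which it is, since it contains a dense non-closed orbit), then $\mathcal{NS}$ cannot be empty. So essentially no work is needed here.

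For the forward implication, suppose $\mathcal{NS}\neq\varnothing$, and pick an equicontinuous point $x_0\in X$. The key observation is that sensitivity of $G$ in the sense of Definition~\ref{Chu2} would force every point of $X$ to be sensitive (as noted after Definition~\ref{Chu2}, sensitivity implies pointwise sensitivity), contradicting the existence of $x_0$. Therefore $G$ is insensitive to initial conditions. Together with the standing assumption that $G$ has a dense non-closed orbit, the hypotheses of Theorem~\ref{TS1} are met. That theorem gives $\mathcal{NS}=\mathcal{D}$, and moreover $\mathcal{D}$ is a dense $G_\delta$-subset of $X$ because $(X,d)$ is a metric Baire space. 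Hence $\overline{\mathcal{NS}}=\overline{\mathcal{D}}=X$.

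There is essentially no obstacle; the only thing to be careful about is the logical distinction between the group $G$ being \emph{not sensitive} (i.e.\ failing Definition~\ref{Chu2}) and the existence of a single insensitive point, which is precisely what the implication ``sensitive $\Rightarrow$ pointwise sensitive'' bridges. Once this is observed, Theorem~\ref{TS1} does all the work.
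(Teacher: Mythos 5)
Your proof is correct and takes exactly the route the paper intends: the paper gives no argument at all (it calls this a ``direct corollary'' of Theorem~\ref{TS1}), and your filling-in --- using ``sensitive $\Rightarrow$ pointwise sensitive'' to upgrade the existence of one insensitive point to insensitivity of $G$, then quoting Theorem~\ref{TS1} to get $\mathcal{NS}=\mathcal{D}$ dense --- is precisely the intended derivation. (As a minor aside, density of $\mathcal{D}$ already follows from $\mathcal{D}\supset G.x$ for the dense orbit $G.x$, so the Baire hypothesis is not actually needed for this direction.)
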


\begin{corollary}\label{CS2} Let $(X, d)$ be a metric Baire space and let $G$ be a group of homeomorphisms of $X$
with a dense non-closed orbit. Then $G$ is either almost equicontinuous or sensitive.
\end{corollary}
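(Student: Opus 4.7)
The plan is to argue by dichotomy, using Theorem~\ref{TS1} as a black box. The statement we want is an either/or, so I would begin by assuming $G$ is not sensitive and aim to show that $G$ must then be almost equicontinuous; the case where $G$ is sensitive is trivially one horn of the dichotomy.

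Concretely, suppose $G$ is not sensitive. Since $(X,d)$ is a metric Baire space and $G$ has a dense non-closed orbit, the hypotheses of Theorem~\ref{TS1} are met. The conclusion of that theorem gives ${\cal NS}={\cal D}$ and, crucially, that ${\cal D}$ is a dense $G_\delta$-subset of $X$. By definition, $\cal NS$ is exactly the set of equicontinuous points of the action, so the density of $\cal NS$ in $X$ is precisely the statement that $G$ is almost equicontinuous, as required.

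The two subcases together yield the dichotomy: either $G$ is sensitive, or the set of equicontinuous points is dense, i.e., $G$ is almost equicontinuous. I do not anticipate any obstacle: the entire content is already packaged in Theorem~\ref{TS1}, and the only thing one must notice is that the hypothesis ``$G$ is insensitive'' in Theorem~\ref{TS1} is simply the negation of the first alternative in the present corollary, and that the conclusion ``${\cal NS}$ is dense'' is the verbatim definition of almost equicontinuity supplied just before the corollary. Thus the proof is essentially a one-line reduction to the previous theorem, with the Baire hypothesis on $(X,d)$ invoked solely to activate the second half of Theorem~\ref{TS1} (density of the $G_\delta$-set $\cal D$).
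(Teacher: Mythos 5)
Your proof is correct and is exactly the argument the paper intends: the corollary is stated there as a "direct corollary" of Theorem~\ref{TS1}, obtained precisely by negating sensitivity, invoking the theorem to get ${\cal NS}={\cal D}$ dense (via the Baire hypothesis), and reading off almost equicontinuity from the definition. Nothing to add.
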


\begin{remark}\label{RS1} Let $(X, d)$ be a metric Baire space with a countable base and let $G$ be a topologically transitive group of 
homeomorphisms of $X.$ Then, according to Proposition~\ref{Birkhoff}, $G$ has a dense orbit. Therefore for such $G$ Theorem~\ref{TS1} and 
its Corollary~\ref{CS2} are applicable.
\end{remark}

\begin{remark}\label{RS2} For topologically transitive dynamical systems $(X, T)$ where $T = \{f^n\}_{n\in\mathbb N}$,
$f: X\to X$ is a continuous map of an infinite compact metric space $(X, d)$, the statements similar to Theorem~\ref{TS1} and 
Corollary~\ref{CS2} were proved in (\cite[{Th. 2.4}]{AAB}).
\end{remark}

\subsection{Proof of Theorem~\ref{Sensation2}}
Assume that $G$ satisfies the conditions of Theorem~\ref{Sensation2}, but $G$ is not sensitive to initial conditions. Due to
Proposition~\ref{PP}, there exists an equicontinuous point of $G$ in $X$. Therefore the conditions of Theorem~\ref{TS1} are 
satisfied. Since there exists a dense orbit, $\cal D\neq\varnothing.$ Consequently, according to Theorem~\ref{TS1}, 
$\cal NS = \cal D\neq\varnothing,$ hence there exists equicontinuous point 
$x\in X$ with the dense orbit $G.x$. This means that for every $\eta > 0$ there exists a neighborhood $U = U_x$ of $x$ such that 
$\diam(g. U)<\eta$ for every $g\in G$. Denote by $M$ the union of all minimal sets of the group $G$. The set $M$ is dense in $X$ 
by condition $(2)$ of Theorem~\ref{Sensation2}, hence there exists $z\in U\cap M.$ Consequently
\begin{equation}\label{S32}
d(g.x, g.z) < \eta\,\,\,\,\,\,\,\, \forall\, g\in G,
\end{equation}
in particular, setting $g = id_X$ we see that 
\begin{equation}\label{S320}
d(x, z) < \eta.
\end{equation}
Pick an arbitrary point $v\in X\setminus G.x$. According the condition of Theorem~\ref{Sensation2}, $X$ is locally compact, hence
there exists an open neighborhood $U_v$ the closure $\overline{U_v}$ of which is compact. Then every neighborhood 
$D_{2\eta}(v)\subset U_v$ is also relatively compact.
As $\overline{G.x} = X,$ in the metric space $(X, d)$ there exists a sequence $\{g_k.x\}$, $g_k\in G$, converging to $v$ as 
$k\to\infty.$ So for every $\varepsilon$ satisfying the inequalities $0< \varepsilon < \eta/2$ there exists a number $k_0$ thus that
\begin{equation}\label{S33}
d(v, g_k.x) < \varepsilon\,\,\,\,\,\,\,\, \forall\, k > k_0.
\end{equation}
We consider $g_n.z$ and $g_m.z$ for $n\neq m$ as different members of the sequence $\{g_k.z\},$ $k\in\mathbb N$.

Since $g_k.U$ is an open neighborhood of a point $g_k.x,$ and $g_k.z\in g_k.U$, $\diam(g_k.U) <\eta$, then 
$$d(v, g_k.z)\leq d(v, g_k.x) + d(g_k.x, g_k.z) < \varepsilon +\eta\,\,\,\,\,\,\,\, \forall\, k > k_0.$$ 
Consequently, $g_k.z\in D_{\varepsilon +\eta}(v)\subset D_{2\eta}(v) \subset U_v$ for $k > k_0.$ Compactness of 
$\overline{D_{2\eta}(v)}$ implies the existence of a converging subsequence $g_{k_s}.z\to y$ as $s\to \infty.$ Therefore 
$y\in\overline{G.z}\cap\overline{D_{\varepsilon +\eta}(v)}\subset \overline{D_{2\eta}(v)}$ for every $\varepsilon$ satisfying the 
inequalities $0< \varepsilon < \eta/2.$ For simplicity, denote this subsequence also by $\{g_k.z\}$. Emphasize that if $y$ is an 
isolated point of the minimal set $\overline{G.z}$, then $\overline{G.z} = G.z$ is a closed orbit, and it is necessary $y = g_k.z,$ 
starting from some $k,$ and all the inequalities obtained further are correct.

Since $\overline{G.z}$ is a minimal set of $G$, and $y\in\overline{G.z},$ then $\overline{G.y} = \overline{G.z}$. Therefore 
$G.z\subset\overline{G.y}.$ As $U$ is a neighborhood of the point $z$, and $z\in\overline{G.y},$ there exists $y_0\in U\cap G.y.$ 
This implies the existence $h\in G$ such that $y_0 = h.y.$ Consider a homeomorphism $h: X\to X$. Let $v' := h(v) = h.v.$ Taking in 
account that $g_k.x\to v$ as $k\to\infty,$ we get $h.(g_k.x)\to v'$ as $k\to\infty.$ This means the existence $k_1$ such that 
\begin{equation}\label{S1}
d(h.(g_k.x), v') = d(h g_k.x, v') < \varepsilon \,\,\,\,\,\,\, \forall\, k > k_1.
\end{equation}
As $\diam(g. U)<\eta$ for every $g\in G$, we have $\diam(h.(g_k.U)) = \diam(hg_k.U) < \eta.$
Note that $h.(g_k.x), \,h.(g_k.z)\in h g_k.U,$ so we have
\begin{equation}\label{S3}
d(h g_k.x, h g_k.z) < \eta.
\end{equation}
Since $g_k.z\to y$ as $k\to\infty,$ we get $h(g_k.z)\to h.y = y_0$ as $k\to\infty,$ hence there exists $k_2$ 
such that 
\begin{equation}\label{S4}
d(y_0, h.(g_k.z)) = d(y_0, h g_k.z) < \varepsilon \,\,\,\,\,\,\, \forall\, k > k_2.
\end{equation}.

Consequently, taking in account that $0< \varepsilon < \eta/2$ and 
applying the relations \eqref{S1}, \eqref{S3} and \eqref{S4} for
 $k > max\{k_0, k_1, k_2\},$ we get
\begin{equation}\label{S37}
d(x, v') \leq d(x, y_0) + d(y_0, h g_k.z) +  d(h g_k.z, h g_k.x) + d(h g_k.x, v') < 2\varepsilon + 2 \eta < 3\eta. 
\end{equation}

\vspace{2mm}
Thus we have shown that for every $\eta > 0$ there exists $v'\in G.v$ satisfying the inequality $d(x, v') < 3 \eta,$ hence
$x\in \overline{G.v}.$ Since the closure of every orbit of a homeomorphism group is $G$-invariant, it is necessary that 
$\overline{G.x}\subset \overline{G.v}.$ Therefore $X = \overline{G.v}$ for every point $v\in X.$ This means that $X$ is a minimal 
set of $G$, that contradicts to the condition $(2)$ of the theorem being proved. The contradiction implies that 
the group $G$ is sensitive to initial conditions.  

\vspace{2mm}
As the conditions $(1)$ and $(2)$ of Theorem~\ref{Sensation2} are topological, then the sensitivity of $G$ does not depend on
a choice of metrics on the set $X$ defining the same topology as $d$. \,\,\,\,\, \qed

\vspace{2mm}
Since for Polish spaces the topological transitivity of a homeomorphism group $G$ is equivalent to the existence of 
a dense orbit, then the following statement is true.

\begin{corollary}\label{SensPolish} Let $X$ be a locally compact Polish space. If a homeomorphism group $G$ of $X$ 
satisfies the following two conditions: 
\begin{enumerate}
\item[(1)] the group $G$ is topological transitive on $X$;
\item[(2)] the union of minimal sets of $G$ is a proper dense subset in $X$,
\end{enumerate}
then $G$ is sensitive to initial conditions.
\end{corollary}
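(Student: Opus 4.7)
The plan is to deduce Corollary~\ref{SensPolish} as an essentially immediate specialization of Theorem~\ref{Sensation2} to Polish spaces, with the only small bridging step being the translation of the transitivity hypothesis into the ``dense non-closed orbit'' hypothesis. Since a Polish space is by definition separable and completely metrizable, fixing any compatible metric $d$ makes $(X,d)$ into a metric Baire space with a countable base, and local compactness is preserved, so Theorem~\ref{Sensation2} is applicable the moment its hypothesis (1) is verified. Hypothesis (2) of the corollary is identical to hypothesis (2) of the theorem, so no work is needed there.

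To verify hypothesis (1) of Theorem~\ref{Sensation2}, I would first invoke Proposition~\ref{Birkhoff}: on a Baire space with a countable base, topological transitivity of $G$ is equivalent to the existence of a point $x\in X$ with $\overline{G.x}=X$. This furnishes a dense orbit. It then remains to argue that this dense orbit is non-closed. I would do this by contradiction: if $G.x$ were closed, then $G.x=\overline{G.x}=X$, so the action of $G$ on $X$ would be transitive in the set-theoretic sense, which forces every orbit to coincide with $X$. Consequently $X$ itself would be the unique minimal set of $G$, and the union of minimal sets would be all of $X$, contradicting hypothesis (2) of the corollary, which states that this union is a \emph{proper} dense subset. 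Therefore $G.x$ is non-closed, and hypothesis (1) of Theorem~\ref{Sensation2} holds.

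With both hypotheses of Theorem~\ref{Sensation2} now established for $(X,d)$, the theorem yields that $G$ is sensitive to initial conditions, which is exactly the conclusion of the corollary. Honestly, there is no serious obstacle here: the entire argument rests on the Polish-space equivalence between topological transitivity and existence of a dense orbit (Proposition~\ref{Birkhoff}) and on the observation that a closed dense orbit would make $X$ minimal, contradicting the properness clause in hypothesis (2). The only place that requires even a moment's care is making sure the word ``proper'' in (2) is genuinely used, since without it one could not rule out a closed dense orbit and the reduction to Theorem~\ref{Sensation2} would fail.
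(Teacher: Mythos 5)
Your proof is correct and takes essentially the same route as the paper, which derives the corollary from Theorem~\ref{Sensation2} by citing the Polish-space equivalence of topological transitivity and the existence of a dense orbit (Proposition~\ref{Birkhoff}). You are in fact slightly more careful than the paper's one-line justification, since you explicitly verify that the dense orbit is non-closed by observing that a closed dense orbit would force $X$ to be the unique minimal set, contradicting the properness clause in hypothesis (2) --- a bridging step the paper leaves implicit.
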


\subsection{Sensitivity to initial conditions of the direct product of homeomorphism groups}

Emphasize that all metric spaces are considered with the metric topology. We use notations from 
Section~\ref{S5}. In the case when $J$ is a finite subset of $\mathbb N$, the metric $d$ of the product 
$\prod_{i \in J}(X_i, \widetilde{d}_i)$ is defined by the formula analogous to \eqref{coutable_prod_metrics}, in which $i \in J$.

\begin{theorem}\label{Prod_sens} 
Let $(X, d) = \prod_{i \in J}(X_i, \widetilde{d}_i)$ where $(X_i, \widetilde{d}_i)$ is a metric space and let $J \subset \mathbb N$. Let
$G_i$, $i \in J$, be a homeomorphism group of $X_i$. If there exists $n \in J$ such that $G_n$ is sensitive to initial conditions 
on $(X_n, \widetilde d_n)$, then the direct product of groups $G = \prod_{i \in J} G_i$ is also sensitive to initial conditions on $(X, d)$.
\end{theorem}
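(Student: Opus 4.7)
The strategy is to transfer the sensitivity constant $\delta_n$ of $G_n$ on $(X_n,\widetilde{d}_n)$ directly to the product, producing a sensitivity constant $\delta := \delta_n/2^n$ for $G$ on $(X,d)$. The guiding idea is simple: act by the identity on every coordinate except the $n$-th, where we use a witness to the sensitivity of $G_n$.

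First I would reduce to basic open sets from the canonical base of the Tychonoff topology. Any nonempty open $V \subset X$ contains a basic open set $U = \prod_{i \in J} U_i$ with $U_i = X_i$ for $i$ outside some finite $F \subset J$ and each $U_i$ a nonempty open subset of $X_i$; since $U \subset V$ implies $\diam(g.U) \leq \diam(g.V)$, it suffices to verify the sensitivity inequality on such $U$. The crucial observation is that for such a basic $U$, the $n$-th factor $U_n$ is automatically a nonempty open subset of $X_n$ (whether or not $n$ lies in $F$).

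Next I would apply the hypothesis to this $U_n$: there exists $g_n \in G_n$ with $\diam_{\widetilde{d}_n}(g_n.U_n) \geq \delta_n$. Define $g = \{g_i\}_{i \in J} \in G$ by letting $g_i$ be the identity of $G_i$ for every $i \neq n$. Then
\begin{equation*}
g.U \;=\; (g_n.U_n) \times \prod_{i \neq n} U_i.
\end{equation*}
Fixing any choice of coordinates $c_i \in U_i$ for $i \neq n$, for arbitrary $a,b \in g_n.U_n$ the points $x,y \in X$ defined by $x_n = a$, $y_n = b$ and $x_i = y_i = c_i$ for $i \neq n$ both lie in $g.U$, and the product metric formula \eqref{coutable_prod_metrics} yields $d(x,y) = \tfrac{1}{2^n}\widetilde{d}_n(a,b)$. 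Taking the supremum over $a, b \in g_n.U_n$ gives $\diam(g.U) \geq \tfrac{1}{2^n}\diam_{\widetilde{d}_n}(g_n.U_n) \geq \delta_n/2^n = \delta$, which is precisely what Definition~\ref{Chu2} demands.

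I do not anticipate a genuine obstacle: the proof is essentially a direct unraveling of the weighted-sum product metric on a countable product. The only mild subtlety is that the weight $1/2^n$ forces the transferred sensitivity constant to depend on the index of the sensitive factor. This is harmless, since $\delta > 0$, but it does explain why even one sensitive factor is already enough while the resulting constant degrades with $n$.
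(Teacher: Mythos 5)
Your proof is correct and takes essentially the same route as the paper's: act nontrivially only in the $n$-th coordinate and use the weight $1/2^{n}$ in the product metric \eqref{coutable_prod_metrics} to transfer the sensitivity constant of $G_n$ to $G$. The only cosmetic difference is that you reduce to canonical basic open sets of the Tychonoff topology and obtain the constant $\delta_n/2^{n}$, whereas the paper works with metric balls $D_\eta(x_n)$ around an arbitrary point and arrives at $\sigma/2^{n+1}$; both arguments are valid.
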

\begin{proof}
Let $x = \{x_i\}$ be any point in $X$. Assume that a group $G_n$ is sensitive to initial conditions on $(X_n, \widetilde{d}_n)$. 
This means that there exists a number $\sigma > 0$ such that for the point $x_n \in X_n$ and for any $\eta > 0$ there is an element 
$\widetilde g \in G_n$ such that $\diam(\widetilde g.D_\eta(x_n)) \geq \sigma$. Therefore there exists a point $\widetilde y \in D_\eta(x_n)$ 
satisfying the inequality $\widetilde d_n(\widetilde g.x_n, \widetilde g.\widetilde y) \geq \frac{\sigma}{2}$.

Consider $y = \{y_i\} \in X$ where $y_n = \widetilde y$ and $y_i = x_i$ for $i \in J \setminus \{n\}$. Pick an element $g = \{g_i\} \in G$ 
where $g_i$ is an arbitrary element of $G_i$ where $i \in J \setminus \{n\}$ and $g_n = \widetilde g$. Then
$$d(x, y) = \frac{\widetilde d_n(x_n, \widetilde y)}{2^n} < \frac{\eta}{2^n} =: \varepsilon\,\,\,\, \textit{and} \,\,\,\,
d(g.x, g.y) = \frac{\widetilde d_n(g_n.x_n, g_n.\widetilde y)}{2^n} \geq \frac{\sigma}{2^{n + 1}} =: \delta.$$
Hence $\diam(g.D_\epsilon(x)) \geq \delta$.
Since $\eta$ is any arbitrarily small positive number, then $\varepsilon  = \frac{\eta}{2^n}$ is also any arbitrarily small positive number.
As $x$ is any point from $X,$ the proven means the sensitivity of the group $G=\prod_{i\in J} G_i$ to initial conditions.
\end{proof}

\begin{theorem}\label{Prod_sens2}
Let $G_i$, $i = \overline{1, m}$, be a homeomorphism group of a metric space $(X_i, d_i)$. 
Assume that $(X,d) = \prod_{i = 1}^m(X_i, d_i)$. Then the direct product of groups $G =\prod_{i = 1}^m G_i$ 
is sensitive to initial conditions on $(X, d)$ if and only if there exists $n$, $1 \leq n \leq m$, such that the group $G_n$ is sensitive to initial conditions.
\end{theorem}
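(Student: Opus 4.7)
The ``if'' direction is already given by Theorem~\ref{Prod_sens}, so the plan is to prove the converse implication. I will argue the contrapositive: assuming that none of the factor groups $G_1,\ldots,G_m$ is sensitive on its respective factor $(X_i,\widetilde d_i)$, I will exhibit, for every prospective sensitivity constant $\delta>0$, an open set $U\subset X$ which defeats it for the whole product group $G=\prod_{i=1}^m G_i$. The key feature making the argument work is that the index set is finite, so an open ``box'' in the product is automatically open and the sum $\sum_{i=1}^m 2^{-i}$ is bounded strictly below $1$.

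\textbf{Construction.} Fix $\delta>0$. Negating Definition~\ref{Chu2} for each $i$ furnishes, for every $\varepsilon>0$, an open set $U_i^\varepsilon\subset X_i$ such that
\[
\diam_{\widetilde d_i}\bigl(g_i.U_i^\varepsilon\bigr)<\varepsilon\quad \forall\, g_i\in G_i.
\]
Apply this in each coordinate with the single choice $\varepsilon:=\delta$ and form the open box
\[
U:=U_1^\delta\times U_2^\delta\times\cdots\times U_m^\delta\subset X.
\]
For any element $g=\{g_i\}\in G$ and any pair of points $x=\{x_i\},\,y=\{y_i\}\in U$, the $i$-th coordinates of $g.x$ and $g.y$ lie in $g_i.U_i^\delta$, so $\widetilde d_i((g.x)_i,(g.y)_i)<\delta$ for every $i$.

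\textbf{Metric estimate.} Substituting into the finite-product analog of \eqref{coutable_prod_metrics}, I obtain
\[
d(g.x,g.y)=\sum_{i=1}^m\frac{\widetilde d_i\bigl((g.x)_i,(g.y)_i\bigr)}{2^i}<\delta\sum_{i=1}^m\frac{1}{2^i}=\delta\bigl(1-2^{-m}\bigr)<\delta.
\]
Hence $\diam(g.U)\leq\delta(1-2^{-m})<\delta$ for every $g\in G$. Since $\delta>0$ was arbitrary, no positive number can serve as a sensitivity constant for $G$, so $G$ is not sensitive on $(X,d)$. This is the contrapositive of the ``only if'' direction, completing the proof together with Theorem~\ref{Prod_sens}.

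\textbf{Difficulty.} There is essentially no hard step: the computation is routine once one forms the box $U$, and aligning the tolerances across coordinates is trivial because the same $\varepsilon=\delta$ works in every factor (the geometric series $\sum 2^{-i}$ is uniformly bounded by $1$). The only mild point to check is that sensitivity for $G_n$ is insensitive to replacing $d_n$ by the rescaled bounded metric $\widetilde d_n$ from \eqref{restrict_metrics}; this is immediate from the fact that $d_n(x,y)\geq c$ if and only if $\widetilde d_n(x,y)\geq c/(1+c)$. It is precisely the finiteness of $m$ — which allows a genuine box to be open in the Tychonoff topology — that is responsible for the converse being true here but failing in the countable case covered by Theorem~\ref{Prod_sens}.
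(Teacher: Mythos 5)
Your proposal is correct and follows essentially the same route as the paper: both argue the contrapositive by taking, in each factor, an open set on which every $g_i$ keeps the diameter small, forming the (finite, hence open) product box, and bounding the product metric by the sum of coordinate diameters. The only differences are cosmetic — the paper uses balls $D_{\varepsilon_i}(x_i)$ with $\varepsilon=\min_i\varepsilon_i$ and tolerance $\delta/(2m)$ per coordinate, whereas you use general open sets with the uniform tolerance $\delta$ absorbed by the geometric weights $2^{-i}$ — and your added remark about sensitivity being preserved under the passage from $d_n$ to $\widetilde d_n$ is a legitimate small point the paper glosses over.
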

\begin{proof}
Sufficiency is proved similarly to Theorem~\ref{Prod_sens}. Let's prove the necessity. To do this, assume that the product of groups 
$G = \prod_{i = 1}^m G_i$ is sensitive to initial conditions and at the same time each group $G_i$ is not sensitive to initial 
conditions. Therefore for every $\delta_i > 0$ there exist point $x_i \in X_i$ and $\varepsilon_i > 0$ such that for every $g_i \in G_i$ 
the following inequality $\diam(g_i.D_{\varepsilon_i}(x_i)) \leq \delta_i$ is satisfied. Put 
$\varepsilon = \min\{\varepsilon_i|\, i = \overline{1,m}\} \,\, \Rightarrow \,\, \varepsilon > 0$, $\delta_i = \frac{\delta}{2m}$, 
where $\delta$ is the sensitivity constant of the action of the group $G$. Let $x = \{x_i\}$. 
Then for every $y = \{y_i\} \in D_{\varepsilon}(x)$ we get 
$$d(g.x, g.y) = \sum_{i = 1}^m d_i(g_i.x_i, g_i.y_i) \leq \sum_{i = 1}^m \frac{\delta}{2m} = \frac{\delta}{2},$$ 
consequently $\diam(g.D_\varepsilon(x)) \leq \delta$ that contradicts the sensitivity of the group $G$ to initial conditions. 
Thus, there exists $n$, $1 \leq n \leq m$, such that the group $G_n$ is sensitive to initial conditions.
\end{proof}

\begin{corollary}\label{CProd_sens}
Let $(X,d) = \prod_{i = 1}^m (X_i, d_i)$, where $(X_i, d_i)$ are metric spaces. Let $G_i$ be a homeomorphism group of $X_i$. 
Then the expansiveness of one of the groups $G_i$ entails the sensitivity of the product group $G =\prod_{i = 1}^m G_i$ to initial conditions.
\end{corollary}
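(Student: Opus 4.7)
My plan is to derive this corollary by chaining two results already established in the paper: Example~\ref{E1}, which asserts that expansive homeomorphism groups on a metric space are automatically sensitive to initial conditions (with the expansivity constant playing the role of the sensitivity constant), and Theorem~\ref{Prod_sens2}, whose sufficiency direction (equivalently, Theorem~\ref{Prod_sens} restricted to the finite index case) says that sensitivity of a single factor $G_n$ on $(X_n, d_n)$ propagates to the product group $G = \prod_{i=1}^m G_i$ acting on $(X,d) = \prod_{i=1}^m (X_i, d_i)$.

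Concretely, the argument I would write is: suppose $G_n$ is expansive on $(X_n, d_n)$ for some index $n$ with $1 \le n \le m$. By Example~\ref{E1}, the group $G_n$ is then sensitive to initial conditions on $(X_n, d_n)$, with the expansivity constant $c$ of $G_n$ serving as a sensitivity constant $\delta$ in the sense of Definition~\ref{Chu2}. Since the hypotheses of Theorem~\ref{Prod_sens2} are now fulfilled by this particular index $n$, we conclude that the canonical action of $G = \prod_{i=1}^m G_i$ on $(X,d) = \prod_{i=1}^m (X_i, d_i)$ is sensitive to initial conditions, as required.

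Because both ingredients are already proved in the paper, there is essentially no obstacle: the statement is a formal two-line consequence. The only minor point worth flagging is that the metric on $X$ is the finite-product metric of the form described in Section~\ref{S5} (either an $\ell^p$, supremum, or weighted sum construction), and the previously proved Theorem~\ref{Prod_sens2} is already stated precisely for this setting, so no additional metric manipulation is needed before invoking it.
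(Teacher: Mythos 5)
Your proposal is correct and is exactly the argument the paper intends: Example~\ref{E1} gives that the expansive factor $G_n$ is sensitive on $(X_n,d_n)$ with the expansivity constant as sensitivity constant, and Theorem~\ref{Prod_sens} (equivalently the sufficiency direction of Theorem~\ref{Prod_sens2}) then transfers sensitivity of that single factor to the product group $G=\prod_{i=1}^m G_i$ on $(X,d)$. No gaps; the corollary is indeed a two-line consequence of these previously established facts.
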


\begin{example}\label{E2} If the group of homeomorphisms $G_1$ is expansive on $(X_1, d_1)$, and for $i = \overline{2,m}$ the group $G_i$ is
an isometry group of a metric space $(X_i, d_i)$, then, according to Corollary~\ref{CProd_sens}, the 
group $G = \prod_{i = 1}^m G_i$ is sensitive to initial conditions on the product $(X, d) = \prod_{i = 1}^m (X_i, d_i).$
\end{example}

\section{Chaotic actions of groups}\label{S7}

\subsection{Proof of Theorem~\ref{ThC1} }
Let $A$ be an arbitrary set of indexes. Suppose that the group $G = \prod_{\alpha \in A} G_\alpha$ acts chaotically on the Tychonoff product
$X = \prod_{\alpha \in A} X_\alpha$ of topological spaces $X_\alpha$. By Definition~\ref{Chaos2}, $G$ has a dense non-closed orbit and a dense union of 
closed orbits. According to Theorem~\ref{Trans1}, the group $G$ has a dense non-closed orbit in $X$ if and only if for every 
$\alpha \in A$ the group $G_\alpha$ has a dense orbit in $X_\alpha$. By Theorem~\ref{Closed}, $G$ has a dense union of closed orbits if 
and only if the union of closed orbits of the group $G_\alpha$ are dense on $X_\alpha$ for every $\alpha \in A$. Thus, a chaotic behavior of 
the group $G$ on $X$ is equivalent to a chaotic behavior of each group $G_\alpha$ on $X_\alpha$ for every $\alpha \in A$.
\qed

\vspace{3mm} Further we use notations introduced in Section~\ref{S5}.

\begin{theorem}\label{T1}
For every subset $J \subset \mathbb N$, let $G_i$, $i \in J$, be a homeomorphism group of metric space $(X_i, \widetilde d_i)$ and on 
the product of metric spaces $(X,d) = \prod_{i\in J}(X_i, \widetilde{d}_i)$ the direct product of groups $G = \prod_{i\in J}G_i$ is given. 
Assume that $(X, d)$ is a locally compact metric Baire space. If $G$ acts on $X$ chaotically, then:
\begin{enumerate}
\item[(1)] the group $G$ is sensitive to initial conditions;
\item[(2)] for every $i \in J$ the group $G_i$ is chaotic and sensitive.
\end{enumerate}
\end{theorem}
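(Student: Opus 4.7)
The plan is to derive both conclusions as almost immediate applications of two results already established: Theorem~\ref{Sens1} (chaos implies sensitivity on locally compact metric Baire spaces) and Theorem~\ref{ThC1} (chaos of the canonical product action is equivalent to chaos of each factor action).

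Part~(1) is a direct corollary of Theorem~\ref{Sens1}: by hypothesis $G$ acts chaotically on the locally compact metric Baire space $(X,d)$, so Theorem~\ref{Sens1} immediately yields that $G$ is sensitive to initial conditions.

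For part~(2), I first invoke Theorem~\ref{ThC1} in the ``only if'' direction: chaoticity of the product action $G$ on $X$ forces each factor group $G_i$ to act chaotically on $X_i$. To upgrade chaoticity of $G_i$ to sensitivity, I would apply Theorem~\ref{Sens1} a second time, now to $G_i$ acting on $(X_i, \widetilde d_i)$. This requires verifying that each factor is itself a locally compact metric Baire space. The metric is given. Local compactness descends from $X$ to $X_i$ via the canonical projection $\pi_i \colon X \to X_i$, which is continuous and open in the Tychonoff topology (hence in the equivalent product-metric topology): for any $x_i \in X_i$, pick $x \in \pi_i^{-1}(x_i)$ and a neighborhood $U$ of $x$ in $X$ with $\overline{U}$ compact; then $\pi_i(U)$ is an open neighborhood of $x_i$ whose closure lies in the compact set $\pi_i(\overline{U})$ and is therefore compact. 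Since metric spaces are Hausdorff and every locally compact Hausdorff space is a Baire space, $(X_i, \widetilde d_i)$ is automatically Baire. Theorem~\ref{Sens1} then delivers the sensitivity of $G_i$.

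The argument is essentially modular and I do not expect any serious obstacle; the only nontrivial checkpoint is the descent of local compactness through the open continuous projections, which is standard. All of the heavy lifting has already been carried out in Theorems~\ref{Sens1} and \ref{ThC1}.
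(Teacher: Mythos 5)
Your proposal is correct and follows essentially the same route as the paper: Theorem~\ref{Sens1} gives part~(1) directly, and part~(2) combines Theorem~\ref{ThC1} with a second application of Theorem~\ref{Sens1} to each factor. The only difference is that the paper simply asserts that each $(X_i,\widetilde d_i)$ inherits local compactness and the Baire property from $(X,d)$, whereas you supply the (correct, standard) justification via the open continuous projections and the fact that locally compact Hausdorff spaces are Baire.
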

\begin{proof} First note that by Theorem~\ref{ThC1}, every group $G_i$, $i \in J$, is chaotic. Since $(X, d)$ is a locally compact metric Baire 
space, then every factor $(X_i, \widetilde{d}_i)$, $i \in J$, is also a locally compact metric Baire space. Hence, according to Theorem~\ref{Sens1}, chaoticity of
$G$ implies its sensitivity to initial conditions on $X$ and chaoticity of $G_i$ implies the sensitivity of $G_i$ on $(X_i, \widetilde{d}_i)$. 
\end{proof}

\begin{theorem}\label{T2}
For every subset $J \subset \mathbb N$, let $G_i$, $i \in J$, be a chaotic homeomorphism group of metric space $(X_i, \widetilde d_i)$ and on 
the product of metric spaces $(X,d) = \prod_{i\in J}(X_i, \widetilde{d}_i)$ the direct product of groups $G = \prod_{i\in J}G_i$ is given. 
If there exists $n\in J$ such that $(X_n, \widetilde d_n)$ is a locally compact metric Baire space, then the group $G$ is chaotic and sensitive 
to initial conditions.
\end{theorem}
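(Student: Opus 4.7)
The plan is to assemble the conclusion from three results already established in the paper: Theorem~\ref{ThC1}, Theorem~\ref{Sens1}, and Theorem~\ref{Prod_sens}. The key observation is that chaoticity and sensitivity can be handled separately, and that sensitivity of the product only requires one sensitive factor, while chaoticity of the product is equivalent to chaoticity of every factor.

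First I would derive the chaotic behavior of $G$ on $(X,d)$. Since each $G_i$ acts chaotically on $X_i$ in the sense of Definition~\ref{Chaos2}, Theorem~\ref{ThC1} (applied with the countable index set $J$) immediately yields that the direct product $G = \prod_{i\in J} G_i$ acts chaotically on the Tychonoff product, which by Section~\ref{S5} coincides as a topological space with the metric product $(X,d)$. This takes care of conclusion about chaoticity.

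Next I would upgrade chaoticity of the distinguished factor $G_n$ to sensitivity. By hypothesis, $(X_n,\widetilde d_n)$ is a locally compact metric Baire space, and $G_n$ acts chaotically on it. Theorem~\ref{Sens1} then applies verbatim and gives that $G_n$ is sensitive to initial conditions on $(X_n,\widetilde d_n)$. Note that this is precisely the place where the assumption on the $n$-th factor (not the whole product) enters: we only need one locally compact Baire factor in order to invoke Theorem~\ref{Sens1} for that factor.

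Finally, to transfer sensitivity from the single factor $G_n$ to the whole product $G$, I would invoke Theorem~\ref{Prod_sens}, which states exactly that sensitivity of $G_n$ on $(X_n,\widetilde d_n)$ implies sensitivity of $\prod_{i\in J} G_i$ on $\prod_{i\in J}(X_i,\widetilde d_i)$. Combined with the chaoticity established in the first step, this gives both assertions of the theorem. There is no real obstacle here beyond making sure the hypotheses line up: the only subtle point worth spelling out is that local compactness and the Baire property are required only on the $n$-th factor (unlike in Theorem~\ref{T1}, where they were assumed on the whole product), because the argument routes sensitivity through Theorem~\ref{Prod_sens} rather than through a direct application of Theorem~\ref{Sens1} to $X$ itself.
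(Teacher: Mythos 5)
Your proposal is correct and follows essentially the same route as the paper: chaoticity of $G$ via Theorem~\ref{ThC1}, sensitivity of the distinguished factor $G_n$ from its chaoticity on a locally compact metric Baire space, and transfer of sensitivity to the product via Theorem~\ref{Prod_sens}. The only cosmetic difference is that you cite Theorem~\ref{Sens1} for the middle step where the paper cites Theorem~\ref{Sensation2}; since the former is stated as a direct corollary of the latter and matches the chaoticity hypothesis verbatim, this is the same argument.
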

\begin{proof} Theorem~\ref{ThC1} implies chaoticity of the group $G$. By the condition, $(X_n, \widetilde d_n)$ is a locally compact metric Baire space, 
then according to Theorem~\ref{Sensation2}, the group $G_n$ is sensitive to initial conditions. The application of Theorem~\ref{Prod_sens} 
allows us to state that the $G$ is also sensitive to initial conditions.
\end{proof}

\subsection{Chaotic products of countable homeomorphism groups of compact metrizable spaces}\label{S7_2}

\begin{lemma}\label{LNon}
Let $H$ be a countable group of homeomorphisms of a Polish space $X$. Then:
\begin{enumerate}
\item[(1)] every closed orbit of $H$ is discrete;

\item[(2)] if $X$ is compact, then every closed orbit of $H$ is finite.
\end{enumerate}
\end{lemma}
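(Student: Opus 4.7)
The plan is to exploit three ingredients: a closed subspace of a Polish space is Polish, a countable Polish space cannot be perfect (by the Baire category theorem), and $H$ acts transitively on each orbit by homeomorphisms, which forces homogeneity of topological properties across the orbit.

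For part (1), I would fix a closed orbit $H.x$ and note that, as a closed subset of the Polish space $X$, it is itself a Polish space in the induced topology. Since $H$ is countable, $H.x$ is at most countable; being nonempty and Polish, it admits a complete compatible metric. The key step is then to show that $H.x$ has at least one isolated point. Suppose for contradiction that every point of $H.x$ is a limit point. Then every singleton $\{y\}\subset H.x$ is closed and nowhere dense in $H.x$, so writing $H.x=\bigcup_{y\in H.x}\{y\}$ as a countable union of nowhere dense sets contradicts the Baire category theorem applied to the Polish space $H.x$. Hence some $y\in H.x$ is isolated, i.e.\ there is an open neighborhood $U$ of $y$ in $X$ with $U\cap H.x=\{y\}$. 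Now for any other $z\in H.x$, pick $h\in H$ with $h.y=z$; since $h\colon X\to X$ is a homeomorphism preserving $H.x$ setwise, $h(U)$ is an open neighborhood of $z$ with $h(U)\cap H.x=\{z\}$. Thus every point of $H.x$ is isolated, and $H.x$ is discrete.

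For part (2), assume additionally that $X$ is compact. Then the closed orbit $H.x$ is a closed subset of a compact space, hence compact. Combining with part (1), $H.x$ is both compact and discrete, and therefore finite.

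The only mildly delicate step is the appeal to the Baire category theorem on $H.x$, which requires observing that closedness in the Polish space $X$ makes $H.x$ Polish in its own right, so that a complete compatible metric is available; I expect no serious obstacle beyond being explicit about that point.
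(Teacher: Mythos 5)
Your proof is correct and follows essentially the same route as the paper: both reduce to the fact that the closed orbit is a countable, nonempty Polish (hence Baire) space, derive a contradiction from the assumption that it has no isolated points via the Baire category theorem, and use the homogeneity coming from the group action to propagate isolatedness (the paper propagates non-isolatedness instead, and phrases Baire dually via intersections of dense open complements of singletons, but these are the same argument). The compact-plus-discrete-implies-finite conclusion for part (2) is identical.
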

\begin{proof} Assume that $H.x$ is a closed orbit of $x\in X$. Then $H.x$ is a Polish space as a closed subset of 
a Polish space $X$. Show that the induced topology on the orbit $H.x$ is discrete. Assume that it is not true, hence $H.x$ has a non-isolated point
$x_0$. Since $H$ is countable, then $H$ is represented in the form $H = \{h_i\,|\, i\in\mathbb N\}$. As $x_0$ is non-isolated, there exists a
sequence $z_n = g_n.x$, $g_n\in H$, such that $z_n\to x_0$ as $n\to +\infty.$ Now check that every point of the orbit $H.x$ is non-isolated. Pick
$y\in H.x,$ then there is an element $g\in H$ for which $y = g.x_0.$ Hence $y_n = g.z_n\to y$ as $n\to +\infty,$ i.e. $y$ is non-isolated.

As $h_i.x$ is not isolated in $H.x$, the set $U_i = H.x\setminus h_i.x$ is dense and open in $H.x$. Since $H.x$ is a Polish space, then $H.x$ 
is also a Baire space. According to the definition of Baire space, the intersection $\bigcap_{i\in\mathbb N}U_i$ is dense in $H.x.$ As 
$\bigcap_{i\in\mathbb N}U_i = \varnothing,$ we get a contradiction. Hence every point of $H.x$ is isolated. Thus, the statement $(1)$ is proved.

Suppose now that a Polish space $X$ is compact. In this case every closed orbit $H.x$, $x\in X$, of $H$ is finite 
as a discrete compact subspace of the compact $X$. Therefore, the statement $(2)$ is also true.
\end{proof} 

\vspace{3mm}
Recall that a group $G$ is referred to as {\it residually finite} or {\it finitely approximable}, if for every non-neutral element 
$g\in G$ there exists a normal subgroup, not containing $g$, of finite index in $G$.

\begin{theorem}\label{ThC3}
If a countable group of homeomorphisms $G$ is chaotic (in the sense of Definition~\ref{Chaos2})
on a metrizable compact space $X$, then $G$ is residually finite and sensitive to initial conditions.
\end{theorem}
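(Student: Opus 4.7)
The plan is to split the two conclusions and handle them separately, observing that a metrizable compact space is automatically a locally compact Polish space, hence a locally compact metric Baire space, so all the earlier machinery applies.

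For the sensitivity claim, I would simply invoke Theorem~\ref{Sens1}: we are given that $G$ is chaotic on $X$ in the sense of Definition~\ref{Chaos2}, and $X$ is locally compact, metrizable and Baire, so Theorem~\ref{Sens1} gives sensitivity to initial conditions at once. This step is essentially free.

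For residual finiteness, the strategy is to reduce Definition~\ref{Chaos2} to Definition~\ref{Chaos1} on this class of spaces and then quote the theorem of Cairns, Davis, Elton, Kolganova and Perversi from \cite{CarDEKP}, which asserts that any group acting chaotically (in the sense of Definition~\ref{Chaos1}) on a compact Hausdorff space is residually finite. So I would first note that the existence of a dense orbit for $G$ implies topological transitivity by Proposition~\ref{Birkhoff}, which handles condition $(1)$ of Definition~\ref{Chaos1}. Then, since $G$ is countable and $X$ is a compact Polish space, Lemma~\ref{LNon}$(2)$ guarantees that every closed orbit of $G$ is finite; therefore the union of closed orbits and the union of finite orbits coincide, and condition $(2)$ of Definition~\ref{Chaos2} translates directly into condition $(2)$ of Definition~\ref{Chaos1}. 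At that point $G$ acts chaotically in the sense of \cite{CarDEKP} on a compact Hausdorff space, and their theorem yields residual finiteness.

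The only genuinely substantive step is the passage from \emph{closed} orbits to \emph{finite} orbits, and this is precisely what Lemma~\ref{LNon} was designed for; countability of $G$ and compactness of $X$ are both essential there, and the Baire category argument in Lemma~\ref{LNon} is what prevents a closed orbit from accumulating on itself. Once that bridge is in place, the rest of the proof is a short composition of previously established results and requires no new estimates.
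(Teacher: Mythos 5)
Your proposal is correct and follows essentially the same route as the paper: the paper's own proof also combines Lemma~\ref{LNon} and Proposition~\ref{Birkhoff} to pass from Definition~\ref{Chaos2} to Definition~\ref{Chaos1}, then cites \cite[Th.~1]{CarDEKP} for residual finiteness and Theorem~\ref{Sens1} for sensitivity. Your write-up merely makes explicit the two sub-steps (dense orbit $\Rightarrow$ transitivity, and closed orbits $=$ finite orbits under countability of $G$ and compactness of $X$) that the paper compresses into one sentence.
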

\begin{proof} Lemma~\ref{LNon} and Proposition~\ref{Birkhoff} imply that definitions of chaos \ref{Chaos1} and \ref{Chaos2} are equivalent. 
Hence the homeomorphism group $G$ is chaotic in the sense of Definition~\ref{Chaos1}. It follows from \cite[Th. 1]{CarDEKP} that every chaotic 
group of homeomorphisms is residually finite. According to Theorem~\ref{Sens1}, chaoticity of the group $G$ in the sense of Definition~\ref{Chaos2}
implies sensitivity of $G$ to initial conditions.
\end{proof}

As it is known, the following groups are residually finite:
\begin{enumerate}
\item[(1)] matrix groups $SL(n, \mathbb Z)$ for all $n \geq 2;$
\item[(2)] finitely generated linear groups;
\item[(3)] (finite or infinite) direct products of residually finite groups;
\item[(4)] countable generated free groups;
\item[(5)] finitely generated nilpotent groups;
\item[(6)] quotients of residually finite groups by finite normal subgroups;
\item[(7)] fundamental groups of compact 3-manifolds
\end{enumerate}
and some others.

Groups having infinite simple subgroups and, in particular, simple groups are not residually finite \cite{CarDEKP}. 

\begin{remark}\label{R} In \cite{CarDEKP} the sensitivity to initial conditions is not investigated.
\end{remark}

\subsection*{Proof of Theorem~\ref{TC2}}
Note that every compact metrizable space is Polish. As it is well known, unlike the Baire space, any Polish space is either countable or has 
the continuum cardinality.

Assume that every group $G_i$, $i\in \mathbb N$, is chaotic. According to Theorem~\ref{ThC1}, the canonical action of the direct product 
of groups $G = \prod_{i\in \mathbb N} G_i$ is chaotic, i.e. the statement $(1)$ is true.

As the group $G_i$ is chaotic, the union of closed orbits of $G_i$ is dense in $X_i.$ Due to countability of $G_i$ and compactness of 
$X_i,$ by Lemma~\ref{LNon}, every closed orbit of $G_i$ is finite. Show that the union $F_i \subset X_i$ of closed orbits of
the group $G_i$, containing greater than one point, is dense in $X_i$. Otherwise there exists an open subset $U \subset X_i$ such that 
$U \cap F_i = \varnothing$. Since $G_i$ is chaotic, then the union of its closed orbits is dense in $X_i$. Therefore the union of one-point 
orbits of $G_i$ is dense in $U$. Continuation of $G_i$ implies $G_i|_U = \id_U$ that contradicts the existence of a dense orbit, i.e.
the chaoticity of $G_i$. Thus, $F_i$ is dense in $X_i$. Analogously to the proof Theorem~\ref{Closed}, we get that the subset 
$F = \prod_{i \in \mathbb N} F_i$ of $X =\prod_{i \in \mathbb N} X_i$ is a union of closed orbits of the group $G = \prod_{i \in \mathbb N} G_i$, 
and $F$ is dense in $X$. Every orbit $G.x$ where $x = \{x_i\} \in X$, $x_i \in F_i$, has continuum cardinality as a product of countable set of 
finite orbits $G_i.x_i$, $i \in \mathbb N$. By the Tychonoff theorem, the orbit $G.x$ is compact as the product of compacts orbits $G_i.x_i$.

Assume that there exists a point $x_0 \in G.x$ which is isolated in $G.x$. Now check that every point of the orbit $G.x$ is isolated. 
Otherwise there is non-isolated $y = g.x_0$ where $g\in G$. Hence there exists a sequence $y_n \to y$ as $n\to +\infty.$ Then 
$z_n = g^{-1}.y_n \to x_0$ as $n\to +\infty$, that contradicts to our assumption. Therefore orbit $G.x$ has no isolated points, i.e. $G.x$ is 
perfect subset of $X$. Consequently, the statement $(2)$ is proved.

According Definition~\ref{Chaos2}, every chaotic group $G_i$ has a dense orbit $G_i.v_i,$ $v_i\in X_i.$ Note that $G_i.v_i$ is a countable subset
of $X_i,$ hence the orbit $G.v = \prod_{i\in\mathbb N}G_i.v_i$ has continuum cardinality. Since 
$\overline{G.v} = \prod_{i\in\mathbb N}\overline{G_i.v_i} = \prod_{i\in\mathbb N}X_i = X$, the orbit $G.v$ is dense in $X$. 
Hence the statement $(3)$ is proved.

The statements $(4) - (5)$ are corollaries of Theorem~\ref{ThC3}.

Let for every $i\in\mathbb N$, the group $G_i$ has a fixed point $x_i^0$. Let $Y$ be the union of all finite orbits of the group 
$G = \prod_{i \in \mathbb N} G_i$. Note that $x_0 = \{x_i^0\}$ is a fixed point of $G$, hence $x_0 \in Y$. Observe that $z = \{z_i\} \in Y$ 
if and only if there exists a finite subset $A \subset \mathbb N$ such that $z_i = x_i^0$ for $i\in\mathbb N\setminus A$ and $z_i$ 
has a finite orbit $G_i.z_i$ for $i\in A$. As the intersection of $Y$ with every set from
the canonical base of the Tychonoff topology of $X$ is a nonempty set, $Y$ is dense in $X$. Thus the statement $(6)$ is proved.
\qed

\section{Products of groups generated by generalized horseshoe maps}\label{S8}
\subsection{Bi-infinite sequence of symbols}
Let $S = \{1, 2, ... , N\}$ with $N \geq 2$. Equip the set $S$ with discrete metric $d$. Note that the metric topology coincides with the
discrete topology on $S$. Let $\{S_i|\,i \in \mathbb Z\}$ be a family of topological spaces, and $S_i = S,$ $d_i = d$ for every $i \in \mathbb Z$. 
Let $\Sigma^N$ be a Tychonoff product of this family: $\Sigma^N = \mathop \prod \limits_{i \in \mathbb Z} S_i$. 
Every point of $\Sigma^N$ can be represented as a bi-infinite sequence of $N$ symbols: 
$$\sigma \in \Sigma^N \, \Leftrightarrow\, \sigma = (... , \sigma_{-k}, ... , \sigma_{-1}, \sigma_0, \sigma_1, ... , \sigma_k, ...),$$ where 
$\sigma_i \in \{1, 2, ... , N\}$ for every $i \in \mathbb Z$. The topological space $\Sigma^N$ is compact (as product of compact spaces), 
totally disconnected (as product of totally disconnected spaces), perfect and uncountable. Also $\Sigma^N $ is a metrizable space. 
The metric on $\Sigma^N$ can be defined by the following way 
$$\rho(\sigma, \tau) = \mathop \sum \limits_{i \in \mathbb Z} \frac{1}{2^{|i|}} \frac{d_i(\sigma_i, \tau_i)}{1 + d_i(\sigma_i, \tau_i)},$$ 
where $d_i$ is the metric on the space $S_i$. The topology induced by the metric $\rho$ on $\Sigma^N$ is the same as the Tychonoff topology 
on the product $\prod_{i \in \mathbb Z} S_i$.

As is known, the space $\Sigma^2$ is homeomorphic to the standard ternary Cantor set \cite[Example 3.1.28]{Eng}. 

Now we define a map $g: \Sigma^N \to \Sigma^N$ as follows: $(g(\sigma))_i = \sigma_{i + 1}$. The map $g$ is referred to 
as {\it the full $N$-shift}. Consider a homeomorphism group $G = \langle g \rangle$ generated by $g$. 

\begin{proposition}\label{shift_is_chaotic} The homeomorphism group $G = \langle g \rangle$ is chaotic on the space $\Sigma^N$, and the 
union of finite orbits is infinite countable and dense in $\Sigma^N$. Besides, the group $G$ is sensitive to initial conditions.
\end{proposition}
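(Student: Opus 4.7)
The plan is to verify the two defining properties of Definition~\ref{Chaos2} directly, and then extract the cardinality and sensitivity assertions from the construction. Two structural facts drive everything: the canonical base of $\Sigma^N$ consists of cylinder sets $C(a_{-k},\dots,a_k):=\{\sigma\in\Sigma^N\mid \sigma_i=a_i\text{ for }|i|\le k\}$; and, since each $d_i$ is the discrete metric, $\rho(\sigma,\tau)\ge \tfrac12$ whenever $\sigma_0\ne\tau_0$.

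For density of closed orbits, I would take an arbitrary cylinder $C(a_{-k},\dots,a_k)$ and extend the block $(a_{-k},\dots,a_k)$ periodically to a bi-infinite sequence $\sigma^{\ast}$ of period $2k+1$. Then $\sigma^{\ast}\in C(a_{-k},\dots,a_k)$ and $G.\sigma^{\ast}$ has at most $2k+1$ elements, so it is finite and automatically closed. Hence the union $F$ of finite orbits of $G$ is dense. Because the set of finite central blocks is countable and only finitely many periodic sequences exist for each period, $F$ is infinite countable.

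Next I would produce a dense orbit. Enumerate all finite words $w_1,w_2,\dots$ over $\{1,\dots,N\}$ and define $\sigma^{\circ}\in\Sigma^N$ by $\sigma^{\circ}_i:=1$ for $i<0$ and by letting the positions $i\ge 0$ spell out the concatenation $w_1 w_2 w_3\cdots$. Any cylinder $C(a_{-k},\dots,a_k)$ contains the word $(a_{-k},\dots,a_k)$, which equals some $w_m$; choosing the integer $n$ that aligns the starting position of $w_m$ in $\sigma^{\circ}$ with coordinate $-k$ yields $g^{n}.\sigma^{\circ}\in C$. Hence $\overline{G.\sigma^{\circ}}=\Sigma^N$. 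Since $G$ is countable the orbit $G.\sigma^{\circ}$ is countable, while $\Sigma^N$ is homeomorphic to a Cantor set and hence uncountable, so this dense orbit cannot be closed.

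For sensitivity, I would observe that $g$ is expansive with constant $c=\tfrac12$: for $\sigma\ne\tau$ pick $i\in\mathbb Z$ with $\sigma_i\ne\tau_i$; then $(g^{i}.\sigma)_0=\sigma_i\ne\tau_i=(g^{i}.\tau)_0$, so $\rho(g^{i}.\sigma,g^{i}.\tau)\ge\tfrac12$. By Example~\ref{E1} expansiveness gives sensitivity; alternatively, once chaoticity is established and one notes that $\Sigma^N$ is a compact metric Baire space, Theorem~\ref{Sens1} applies. The only step requiring genuine care is the construction of $\sigma^{\circ}$ together with the verification that its shift orbit meets every canonical-basis cylinder; everything else is short combinatorial bookkeeping.
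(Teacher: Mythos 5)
Your proposal is correct, and its core — periodizing the central block $(a_{-k},\dots,a_k)$ to produce a periodic point in any prescribed cylinder (equivalently, in any ball $D_\varepsilon(\sigma)$) — is exactly the argument the paper gives for density of finite orbits. The differences are in what you make self-contained. The paper simply cites \cite[Prop.~3.9.4]{Sturman-Ottino-Wiggins} for the existence of a dense orbit and argues countability ``by analogy with \cite[Corollary 2.5.1]{Katok}'', whereas you construct the dense point $\sigma^{\circ}$ explicitly by concatenating all finite words on the nonnegative coordinates and verify the alignment $g^{p+k}.\sigma^{\circ}\in C(a_{-k},\dots,a_k)$ directly; you also add the (needed for Definition~\ref{Chaos2}) observation that this dense orbit is countable and hence non-closed in the uncountable space $\Sigma^N$, a point the paper leaves implicit. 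For sensitivity the paper goes through Theorem~\ref{Sens1} (compact Polish, hence Baire, space), which you mention as an alternative; your primary route via expansiveness and Example~\ref{E1} is more elementary and gives an explicit sensitivity constant. One cosmetic point: with the strict inequality $d(g.x,g.y)>c$ in the definition of expansiveness, your estimate $\rho(g^{i}.\sigma,g^{i}.\tau)\ge\tfrac12$ forces you to take the expansivity constant strictly below $\tfrac12$, say $c=\tfrac14$; this does not affect the conclusion.
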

\begin{proof}
According to \cite[Prop. 3.9.4]{Sturman-Ottino-Wiggins}, the group $G$ has a dense orbit.

Let $\sigma = (\sigma_i) \in \Sigma^N$. It is sufficiently to show that for every $\varepsilon > 0$ there is periodic point 
$\tau \in D_\varepsilon(\sigma)$ of the group $G$. Let $m > \log_2 \frac{1}{\varepsilon}$ and $m \in \mathbb N$. The point 
$\tau = \{\tau_i\}$, where $\tau_{k(2m + 1) + j} = \sigma_j\,\,\, \forall k \in \mathbb Z,\,\,\, \forall j\in [-m, m]\cap\mathbb Z$, 
is periodic with the minimal period $2m + 1.$ Besides, $\tau \in D_\varepsilon(\sigma)$ because 
$\rho(\sigma, \tau) \leq \mathop \sum \limits_{i = m + 1}^\infty \frac{1}{2^i} = \frac{1}{2^m} < \varepsilon$.
Thus, the set of periodic orbits is dense in $\Sigma^N$.

By analogy with \cite[Corollary 2.5.1]{Katok}, one can proved that the set of periodic orbits is infinite countable.

Moreover, the topological space $\Sigma^N$ has a countable base, then it is a compact Polish space, hence Theorem~\ref{Sens1} implies 
that the chaotic group of homeomorphisms $G$ of $\Sigma^N$ is sensitive to initial conditions.
\end{proof}

\begin{remark} \label{sequence_prod}
Let $J$ be any subset of $\mathbb N$ and $\Sigma^N_i = \Sigma^N$ for all $i\in J.$ Applying \cite[Prop.~2.3.7]{Eng}, we get that 
the Tychonoff product $\Sigma = \prod_{i\in J}\Sigma^N_i$ is homeomorphic to the space $\Sigma^N$.
\end{remark}

\subsection{$N$-ary Cantor sets}\label{Cantor2}
Here we recall the known generalization of the standard ternary Cantor set and the relation between this generalization and 
the space $\Sigma^N$.

\vspace{2mm}
Let $N \in \mathbb N$, $N > 2$ and $N$ is odd. The $N$-ary Cantor set can be constructed in the similar way as the standard ternary Cantor 
set. Let $C_0 = [0, 1]$, $C_1 = [0, \frac{1}{N}] \cup [\frac{2}{N}, \frac{3}{N}] \cup ... \cup [\frac{N - 1}{N}, \frac{N}{N}]$. Further 
we subdivide every closed interval of the set $C_1$ 
into $N$ equal parts and delete open intervals with even numbers. The remaining set is denoted $C_2$. By repeating this process, we get 
sets $C_i$ for every $i \geq 2$. 
The set $C = \bigcap_{i \in \mathbb N} C_i$ is the $N$-ary Cantor set \cite[section 2.3]{Chovanec}. In Figure~\ref{Cantor_set} it is illustrated this 
definition for case $N = 5$.
\begin{figure}[H]
\centerline{\includegraphics[width=0.6\columnwidth]{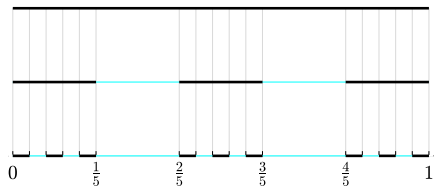}}
\caption{Three steps of construction of the $5$-ary Cantor set.}
\label{Cantor_set}
\end{figure}

Another equivalent approach to the definition of the $N$-ary Cantor set is known. Let $N \in \mathbb N$, $N > 2$ and $N$ is odd. Recall that the set 
$$\left\{\sum_{i \in \mathbb N} \frac{a_i}{N^i}|\, a_i \in \mathbb Z,\, 0 \leq a_i < N,\, a_i \, \text{is even}\right\}$$ is called the $N$-ary Cantor set 
\cite[section 1.9.1]{A_Kh_Sh}.

The $N$-ary Cantor set is homeomorphic to the space $\Sigma^{\frac{N + 1}{2}}$. The proof of this fact is fully similar the proof that the standard ternary 
Cantor set is homeomorphic to the space $\Sigma^2$ \cite[Example 3.1.28]{Eng}.

\subsection{Generalized horseshoe maps}
Original notion of the horseshoe map belongs to S. Smale. As is known, the invariant set $\Lambda$ of the horseshoe map is homeomorphic to the standard 
ternary Cantor set and the restriction of the horseshoe map to the set $\Lambda$ is conjugate to the $2$-shift map on the space $\Sigma^2$ \cite[Section 2.5]{Katok}.

The definition of generalized horseshoe map of length $N$ can be found in \cite[Definition 2.7]{Hirasawa-Kin}. In Figure~\ref{Gen_horseshoe} it is 
illustrated this definition for case $N = 3$.

\vspace{10mm} 
\begin{figure}[H]
\centering
\vspace*{-5em}
\subfloat[]
{
\includegraphics[width=0.7\columnwidth]{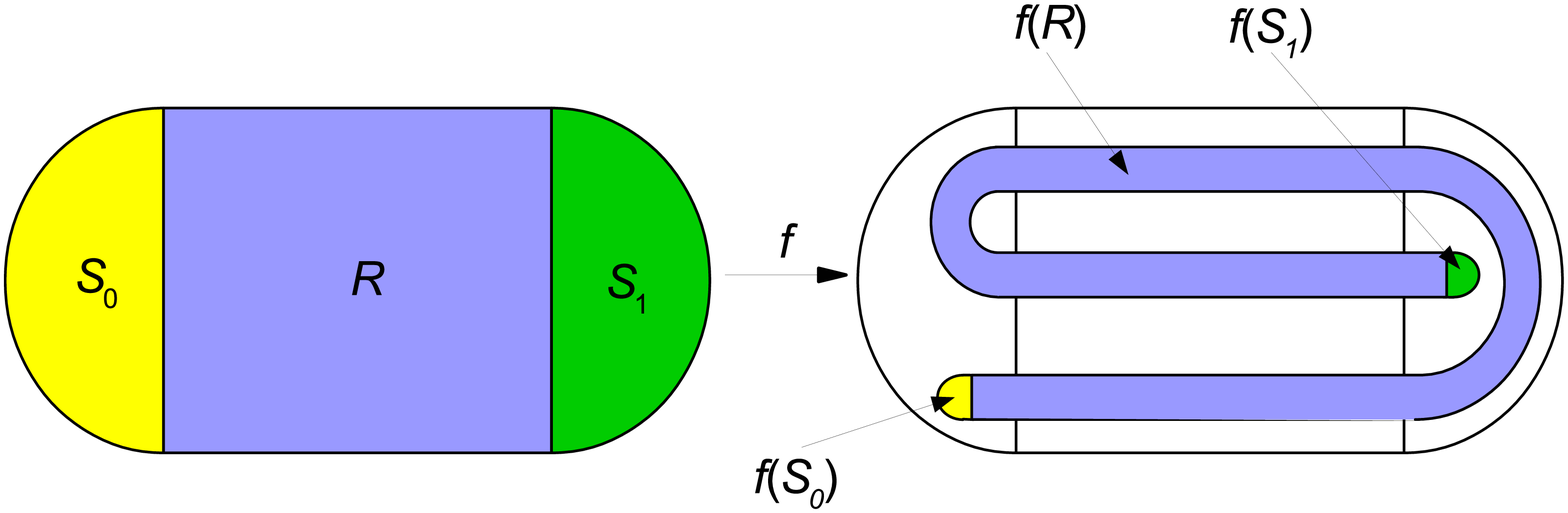}
\label{Gen_horseshoe_a}
}\\
\vspace*{-3em}
\subfloat[]
{
\includegraphics[width=0.7\columnwidth]{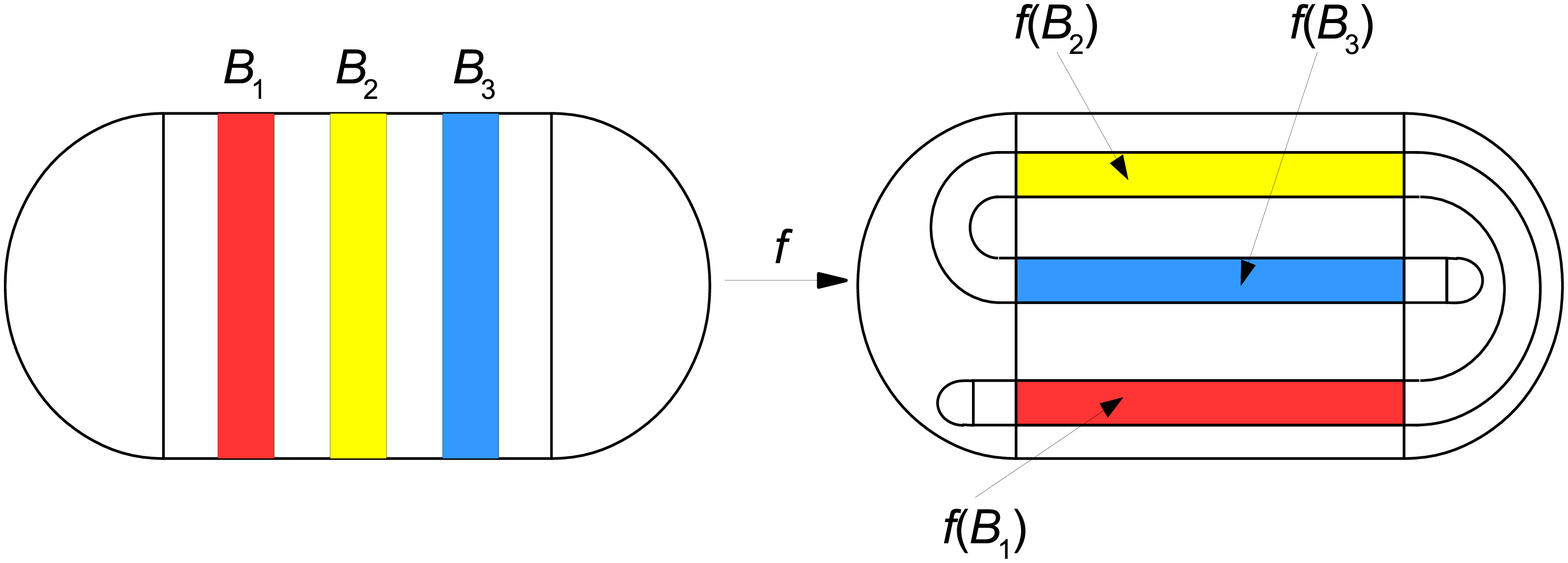}
}
\caption{The generalized horseshoe map of length 3.}
\label{Gen_horseshoe}
\end{figure}

Let $D = S_0 \cup R \cup S_1$. The generalized horseshoe map $f$ of length $3$ has the set $D$ as its domain. The image of the set 
$D$ respectively $f$ is shown in Figure~\ref{Gen_horseshoe_a}.

The invariant set $\Lambda$ of the generalized horseshoe map of length $N$ is homeomorphic to the $(2N - 1)$-ary Cantor set and, consequently, 
to the space $\Sigma^N$. The full $N$-shift is conjugate to the restriction of the generalized horseshoe map of length $N$ to the invariant set 
$\Lambda$ \cite[Section 4.2.3]{Sturman-Ottino-Wiggins}, \cite[Remark 2.9]{Hirasawa-Kin}.

\subsection{Chaotic groups of homeomorphisms on countable products of Cantor sets}
Let $A$ be any index set. Consider a family of chaotic groups $\{G_\alpha = \langle g_\alpha \rangle\}_{\alpha \in A},$ where $g_\alpha$ is 
the full $N_\alpha$-shift of $\Sigma^{N_\alpha}$. According to Theorem~\ref{ThC1}, the product of groups $G = \prod_{\alpha \in A}G_\alpha$ 
of homeomorphisms of the space $\Sigma = \prod_{\alpha \in A}\Sigma^{N_\alpha}_\alpha$ is chaotic.

Let the set $A$ be countable. As we have seen above, every $\Sigma^{N_\alpha}$ is a compact metric space, hence $\Sigma^{N_\alpha}$ is 
a compact metric Polish space. Consequently, $\Sigma$ is a compact metric Polish space, so according to Theorem~\ref{Sens1}, $G$ is sensitive 
to initial conditions on $\Sigma$.

In the case when $N_\alpha = N$ for every $\alpha \in A \subset \mathbb N$, we get $\Sigma^{N}_\alpha := \Sigma^{N_\alpha} = \Sigma^{N}$  and 
$\Sigma = \prod_{\alpha \in A}\Sigma^N_\alpha$. According to Remark~\ref{sequence_prod}, $\Sigma$ is homeomorphic to $\Sigma^{N}$. Thus we get 
new chaotic actions of free Abelian groups $G$ with countable set of generators on the space $\Sigma$ homeomorphic to $(2N - 1)$-ary Cantor set.

\section{Construction of chaotic actions of groups on topological manifolds}\label{S9}
\subsection{Chaotic actions of the group $\mathbb{Z}$ on every closed surface}\label{ssur}
 
By a closed surface we mean a connected compact topological two-dimensional ma\-{nifold} $M$ without boundary.
 
In the proof of Theorem~\ref{Surfaces} we use the concept of a two-dimensional orbifold. A simple exposition of the theory of compact two-dimensional
orbifolds can be found in \cite{Scott}. The necessary information about orbifolds used by us is contained in \cite{BZh}.

\begin{theorem}\label{Surfaces} For every closed surface $M$ there exists a countable family of chaotic groups of homeomorphisms, 
isomorphic to the group $\mathbb Z$, such that the union of finite orbits of every such group is dense in $M$.

\end{theorem}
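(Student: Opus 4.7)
The plan is to use the classification of closed surfaces and produce, for each topological type, a self-homeomorphism $f$ of $M$ such that $\langle f\rangle\cong\mathbb Z$ has a dense non-closed orbit and a dense set of periodic (hence finite) orbits; a countable family is then obtained by varying parameters in each construction. Since $M$ is a compact metric space, any dense orbit of a cyclic group is automatically non-closed, so the conditions of Definition~\ref{Chaos2} reduce to: one dense orbit and a dense union of finite orbits.

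For the torus $T^2=\mathbb R^2/\mathbb Z^2$ I would take the hyperbolic toral automorphisms $L_{A_k}$ induced by an infinite family of hyperbolic matrices $A_k\in SL(2,\mathbb Z)$, for instance $A_k=\begin{pmatrix}2k+1 & 1\\ 2k & 1\end{pmatrix}$, $k\in\mathbb N$. Each $L_{A_k}$ is Anosov, hence topologically mixing (so has a dense orbit), and its set of periodic points coincides with the dense subset of rational points of $T^2$, every periodic orbit being finite. Thus $\langle L_{A_k}\rangle\cong\mathbb Z$ is chaotic with the union of finite orbits dense in $T^2$.

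For the remaining closed surfaces of non-negative Euler characteristic, that is $S^2$, $\mathbb{RP}^2$ and the Klein bottle, I would realise $M$ as the underlying space of an orbifold quotient of $T^2$ by a finite group of affine involutions and descend the cat maps, following the method of \cite{CarDEKP}. The elliptic involution $(x,y)\mapsto(-x,-y)$ yields the pillowcase orbifold whose underlying surface is $S^2$, and every $L_A$ commutes with $-I$, so descends to a homeomorphism of $S^2$. A free involution of the form $(x,y)\mapsto(x+\tfrac12,-y)$ produces the Klein bottle, and a congruence condition on $A$ isolates an infinite subfamily of commuting hyperbolic matrices. Since $\mathbb{RP}^2$ has odd Euler characteristic it is not a free quotient of $T^2$; I would handle it by a two-step descent, first passing to $S^2$ as above and then taking a further quotient by the antipodal involution, choosing $A$ compatible with both symmetries. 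In every such case the orbifold covering map $\pi:T^2\to M$ is continuous, open and finite-to-one, so dense orbits project to dense orbits and finite orbits project to finite orbits; the two conditions of Definition~\ref{Chaos2} and the density of finite orbits persist upon descent.

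For closed surfaces of negative Euler characteristic I would invoke Thurston's classification of surface diffeomorphisms: every such surface (orientable of genus $\geq 2$, or non-orientable with at least three cross-caps) admits infinitely many mapping-class-conjugacy classes of pseudo-Anosov diffeomorphisms, and a pseudo-Anosov representative is topologically mixing with a dense set of periodic orbits, all finite. For non-orientable surfaces one lifts to the orientable double cover, chooses a pseudo-Anosov representative commuting with the deck involution, and descends. The hardest point will be the $\mathbb{RP}^2$ case: verifying that the two-step descent preserves both the existence of a dense orbit and the density of finite orbits, since the second quotient is by a map that is only well-defined on the pillowcase after identifications. A secondary task, the construction of countably many genuinely non-conjugate groups within each topological type, is straightforward from the infinite trace spectrum in $SL(2,\mathbb Z)$ in the Euclidean cases and from the infinite dilatation spectrum of pseudo-Anosov classes in the hyperbolic cases.
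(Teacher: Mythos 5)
Your strategy founders precisely at the two surfaces you flag as delicate, and the failure there is structural rather than technical. For the Klein bottle you propose to descend a hyperbolic toral automorphism $L_A$ through the free involution $\sigma(x,y)=(x+\tfrac12,-y)$; but $L_A$ descends to the quotient if and only if $L_A\sigma=\sigma L_A$, and comparing linear parts this forces $A\,\mathrm{diag}(1,-1)=\mathrm{diag}(1,-1)\,A$, i.e. $b=c=0$, hence $A=\pm E_2$ for an integer matrix of determinant $\pm1$. No hyperbolic matrix satisfies your ``congruence condition'': the subfamily you invoke is empty (this is the classical fact that, among closed surfaces, only the torus carries an Anosov diffeomorphism). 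The same obstruction kills the two-step descent to the projective plane: the lift to $\mathbb T^2$ of a free involution of the pillowcase sphere is an orientation-reversing affine involution, and a hyperbolic $A\in SL(2,\mathbb Z)$ cannot commute with its linear part either. So your construction covers the torus, the sphere, and --- granting Thurston theory, which is a heavy import and still requires existence of pseudo-Anosov classes on closed non-orientable surfaces together with their mixing and dense-periodic-point properties --- the surfaces of negative Euler characteristic; but it leaves the Klein bottle and the projective plane genuinely uncovered, and the case split itself is avoidable.

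The paper sidesteps the classification with one uniform device. It takes Devaney's toral linked twist map $g=h\circ f$ supported on the union $R=P\cup Q$ of two annuli in $\mathbb T^2$ (topologically mixing with dense periodic points on $R$ by Devaney's theorem on linked twist maps), extends it by the identity off $R$, and pushes it through the pillowcase quotient $p:\mathbb T^2\to\mathbb S^2$ by $-E_2$. Since $p(R)$ is a closed disk $\mathbb B^2$ and $g$ is the identity on $\partial R$, this yields a chaotic homeomorphism $g_{\mathbb B^2}$ of $\mathbb B^2$ fixing $\partial\mathbb B^2$ pointwise. Every closed surface is obtained from $\mathbb B^2$ by identifications confined to $\partial\mathbb B^2$, and because $g_{\mathbb B^2}$ is the identity there it descends to each such quotient, with the quotient map a homeomorphism on the open disk; the dense orbit and the dense set of finite orbits survive. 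The countable family then comes from the two integer parameters $k,m\ge3$ of the twist map. If you want to salvage your outline you need a separate argument for the Klein bottle and the projective plane; the boundary-fixing chaotic disk homeomorphism is exactly the missing idea.
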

\begin{proof} Show that for each $k, m\in\mathbb N$, $k, m\geq 3$, the following matrix \begin{equation}\label{5E}
A = A(k, m) = \left(
\begin{array}{ccc}
1&k\\
m&1 + km
\end{array}
\right)\,\,\, 
\end{equation} induces a chaotic homeomorphism $g_A$ of the closed two-dimensional disk $\mathbb B^2.$
Let $f_A$ be the Anosov torus automorphism defined by $A.$

We represent the torus $\mathbb T^2$ as the square $[-1/2, 1/2]\times[-1/2, 1/2]$ in a Cartesian coordinate system $Oxy$ 
on a plane $\mathbb R^2$ with identified opposite sides. In other words $\mathbb T^2$ has coordinates $(x, y)$ where $x$ and $y$ are
periodic of period one. For each fixed pair of numbers $k, m\in\mathbb N$, $k, m\geq 3$, consider subsets $P = [-1/2, 1/2]\times[-1/k, 1/k]$ 
and $Q = [-1/m, 1/m]\times[-1/2, 1/2]$ of $\mathbb T^2$. Note that $P$ and $Q$ are two overlapping annuli on the torus $\mathbb T^2$. 
Denote the union of the annuli by $R = P \cup Q$ and the intersection by $S = P \cap Q$, see Figure~\ref{Pillow}. Define maps $f: R\to R$ 
and $h: R\to R$ by the following equations:
\begin{equation}\label{6E}
f(x, y, k) = \left\{
\begin{array}{ll}
(x + ky, y) & \textrm{if}\,\, (x, y)\in P, \\
(x, y) & \textrm{if}\,\, (x, y)\in R\setminus P,
\end{array}
\right.
\end{equation} 

\begin{equation}\label{7E}
h(x, y, m) = \left\{
\begin{array}{ll}
(x, y + mx) & \textrm{if}\,\, (x, y)\in Q, \\
(x, y) & \textrm{if}\,\, (x, y)\in R\setminus Q.
\end{array}
\right.
\end{equation} 

The map $g = h\circ f: R\to R$ is called the {\it toral linked twist map} defined by composing $f$ and $h$.
According to \cite[Theorem A]{Dev}, the constructed linked twist map $g: R\to R$ is topologically mixing and 
the periodic points of $g$ are dense in $R$. Therefore, the homeomorphism group $G = \langle g \rangle$ generated by $g$ is 
topologically transitive and chaotic on the topological space $R.$ Note that $g = g(k, m)$ and $A = A(k, m)$ are related by the equality
$g(x, y) = A \left(
\begin{array}{ccc}
x\\
y\end{array}
\right)$ in some neighborhood $U$ of a point $(0, 0)$, $U\subset S\subset\mathbb T^2.$ 

\begin{figure}[H]
\centerline{\includegraphics[width=0.65\columnwidth]{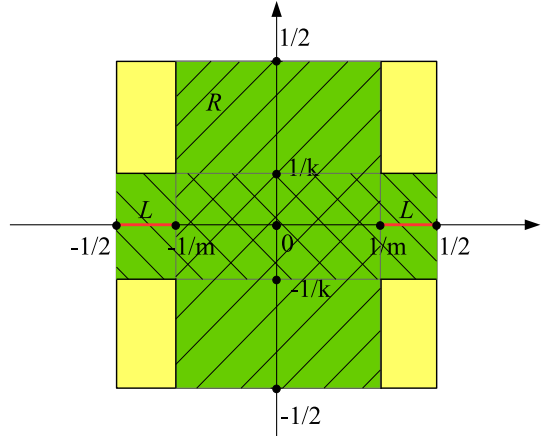}}
\caption{Representation of the torus on the unit square.}
\label{Pillow}
\end{figure}

Let $E_2$ be the unit two-dimensional matrix. Denote by $\varphi$ the homeomorphism of the torus $\mathbb T^2$ given by the matrix $-E_2.$ 
Then $\Phi = \left\langle\varphi\right\rangle$ is a homeomorphism group of $\mathbb T^2.$ Identify the orbit space 
${\cal N} = \mathbb T^2/\Phi$ with the rectangle $[-1/2, 1/2]\times[0, 1/2]$ the sides of which are identified in the way indicated 
by the arrows in Figure~\ref{Pillow2_a}. Denote by $p: \mathbb T^2\to\cal N$ the quotient map. Note that $\cal N$ is the orbifold "Pillow"$\,$ which is 
homeomorphic to the standard sphere $\mathbb S^2$, and both subsets $p(R)$, colored green, and $p(\mathbb T^2\setminus R)$, colored yellow, are 
homeomorphic to a closed disk $\mathbb B^2$ (see Figure~\ref{Pillow2_b}). Identify the topological space of $\cal N$ with the sphere $\mathbb S^2$.
\begin{figure}[H]
\centering
\vspace*{-2em}
\subfloat[Representation of the orbifold "Pillow".]
{
\includegraphics[width=0.4\columnwidth]{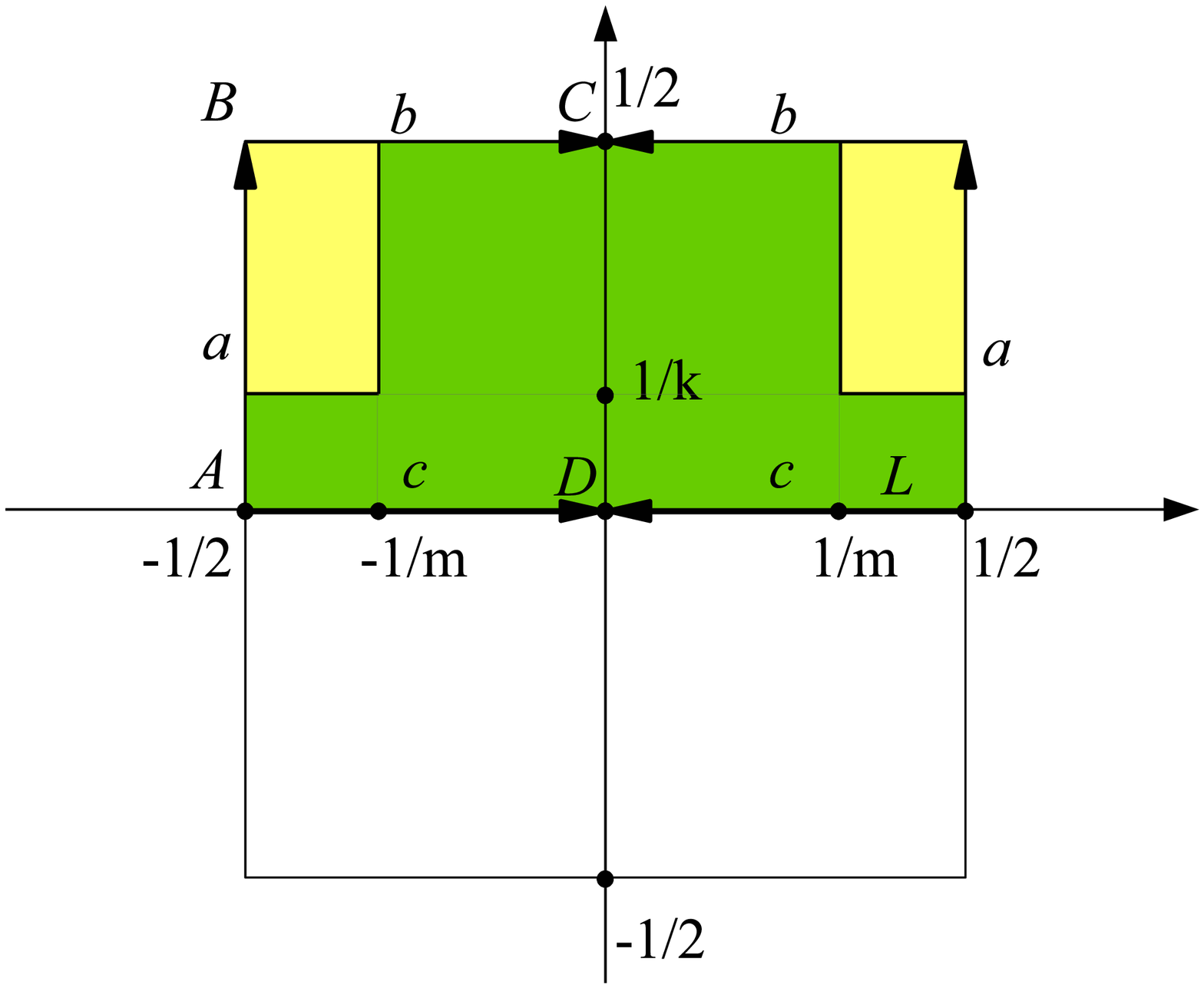}
\label{Pillow2_a}
}
\hspace{3mm}
\subfloat[The orbifold "Pillow" is homeomorphic to the standard sphere.]
{
\includegraphics[width=0.4\columnwidth]{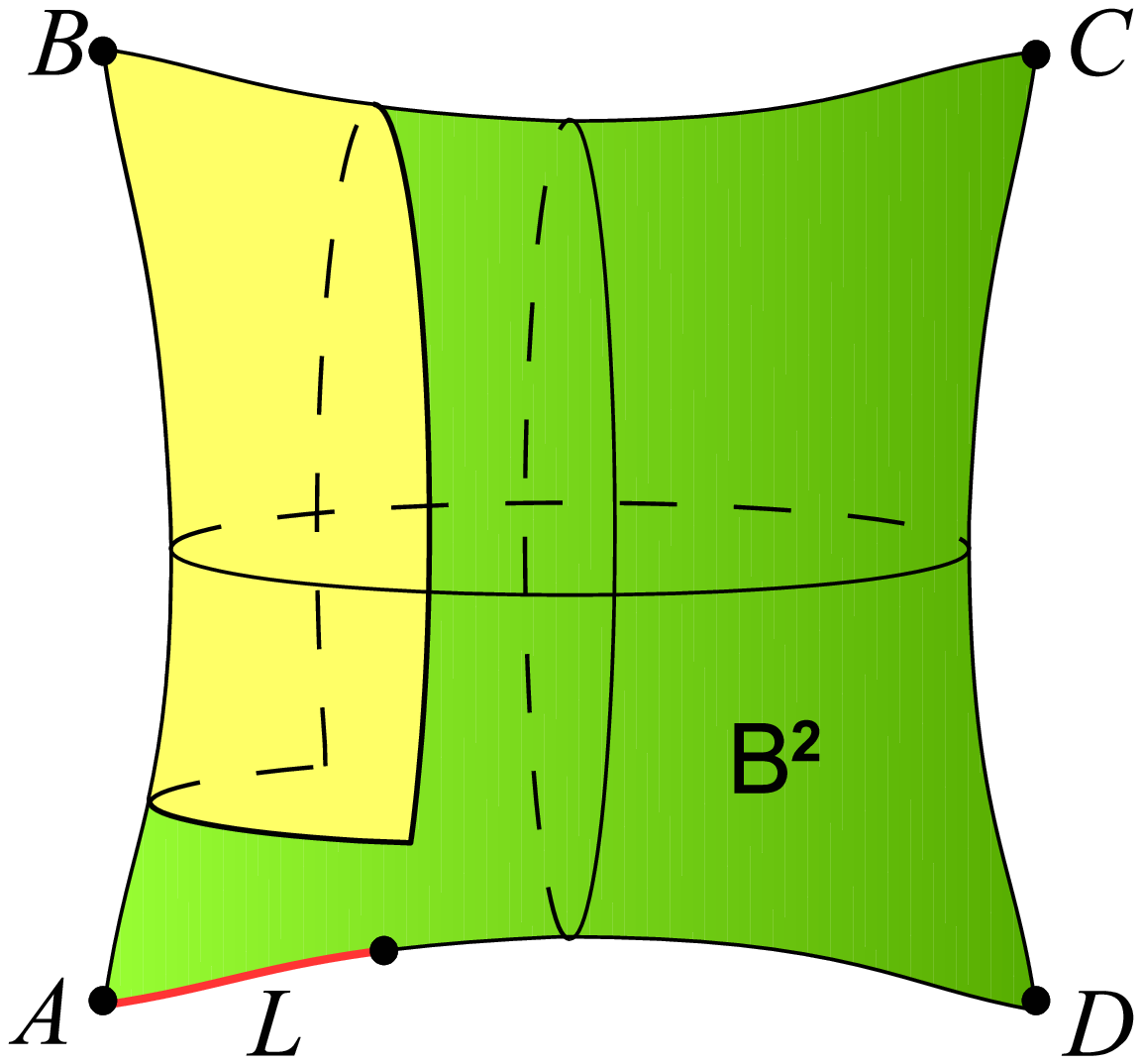}
\label{Pillow2_b}
}
\caption{Orbifold "Pillow".}
\label{Pillow2}
\end{figure}

It is easy to check that  $g|_{\partial R} = \id|_{\partial R}$ where $g = g(k, m)$ is defined by the matrix $A = A(k, m)$. 
Therefore we can continue $g$ for the entire torus $\mathbb T^2$ such that 
$g|_{\mathbb T^2\setminus R} = \id|_{\mathbb T^2\setminus R}.$ We denote the resulting torus homeomorphism also by $g.$ Emphasize that 
the points colored yellow in Figure~\ref{Pillow}, are fixed relative to the homeomorphism $g.$ Since the map $g$ satisfies the equality 
$g(-x,-y) = - g(x, y)$ for every $(x,y)\in\mathbb T^2$, it induces a homeomorphism $g_{\mathbb S^2}: \mathbb S^2\to\mathbb S^2$ satisfying the 
following equality $p\circ g = g_{\mathbb S^2}\circ p.$ Therefore the restriction $g_{\mathbb B^2}$ of $g_{\mathbb S^2}$ to the closed disk 
$\mathbb B^2 = p(R)$ (colored green in Figure~\ref{Pillow2_b}) satisfies the commutative diagram
\begin{equation}\label{EC}
\begin{CD}
R @>{g}>> R \\
@V{p}VV @VV{p}V \\
\mathbb B^2 @>g_{\mathbb B^2}>> \mathbb B^2,
\end{CD} 
\end{equation}
and the projection $p: R\to\mathbb B^2$ is a surjective continuous open map. As a finite orbit of the group $G$ maps onto a finite orbit of the group 
$\Gamma = \langle g_{\mathbb B^2} \rangle$ with respect to $p: R\to\mathbb B^2$, the union of finite orbits of $\Gamma$ is dense in $\mathbb B^2.$ 
A dense orbit of $G$ maps onto a dense orbit of $\Gamma$. Thus, the group $\Gamma$ is chaotic in $\mathbb B^2$. Since $g_{\mathbb S^2}$ is equal 
to identity on another closed disc colored yellow, complementary to $\mathbb B^2$, the group $\Gamma$ fixes every point of the boundary 
$\partial \mathbb B^2.$ Therefore it is possible to glue the boundary $\partial \mathbb B^2$ in an arbitrary way and to obtain a new surface $M$, 
and as the result, we can get every closed surface $M$. Denote the corresponding quotient map by $k: \mathbb B^2\to M.$ Since 
$\Gamma|_{\partial \mathbb B^2} = \id_{\partial \mathbb B^2},$ then $\Gamma$ induces an isomorphic group of homeomorphisms $\widetilde{\Gamma}$ of 
the surface $M$, and the mapping $k: \mathbb B^2\to M$ is a topological semi-conjugation of the groups $\Gamma$ and $\widetilde{\Gamma}$. Emphasize 
that the interior $\Int(\mathbb B^2)$ is invariant respectively $\Gamma$, and the restriction $k|_{\Int(\mathbb B^2)}$ is a homeomorphism conjugating 
$\Gamma|_{\Int(\mathbb B^2)}$ with $\widetilde{\Gamma}|_{k(\Int(\mathbb B^2))}$. Therefore, the group $\widetilde{\Gamma}$ is chaotic on $M$. 
Further we write $\widetilde{\Gamma} = \widetilde{\Gamma}(k, m)$, since this group is defined by the matrix $A = A(k, m)$ of the form \eqref{5E}.

Thus we get a countable family of chaotic groups $$\{\widetilde{\Gamma} (k, m)\,| k, m\in\mathbb N, k, m \geq 3\}$$
on every closed surface $M$. For short, further we denote such families by $\{\widetilde{\Gamma} (k, m)\}$.

More specifically, we consider the following ways of gluing the disk boundary $\partial \mathbb B^2$.

{\it Case I.} Consider the boundary $\partial \mathbb B^2\cong\mathbb S^1$ as a $4q$-polygon whose sides are glued together according to the scheme
$a_{1}b_{1}a_{1}^{-1}b_{1}^{-1}\cdot\cdot\cdot a_{q}b_qa_{q}^{-1}b_{q}^{-1}$, $q\in\mathbb N$. The scheme is obtained as follows: the direction of 
movement along the sides of the polygon is selected, the sides are written out in a row, the glued sides are marked with one letter, the degree $-1$ means 
the direction of gluing opposite to the direction of movement along the side. As a result we get a closed surface homeomorphic to the sphere with $q$ handles. Denote this surface by $\mathbb S^2_q$. Thus we get a countable family of chaotic groups $\{\widetilde{\Gamma} (k, m)\}$ on $\mathbb S^2_{q}$
for every $q\in\mathbb N.$

{\it Case II.} Consider the boundary $\partial \mathbb B^2\cong\mathbb S^1$ as a $2q$-polygon whose sides are glued together according to the scheme
$a_{1}a_{1}\cdot\cdot\cdot a_{q}a_{q}$, $q\in\mathbb N$. In this case we get a closed non-orientable surface homeomorphic to the sphere with $q$ Mobius bands which will be denoted by $N\mathbb S^2_{q}$. Thus we get an infinite series of chaotic groups $\{\widetilde{\Gamma} (k, m)\}$ on each surface 
$N\mathbb S^2_q$, $q\in\mathbb N.$

{\it Case III.}  Let us glue the boundary $\partial \mathbb B^2\cong\mathbb S^1$ into an arbitrary segment denoted by $L_0$ such that as the result we get 
a topological sphere $\mathbb S^2$. In the same way as above we get an infinite series of chaotic groups $\{\widetilde{\Gamma} (k, m)\}$
on $\mathbb S^2$.

Since every closed surface $M$ is homeomorphic to one of the canonical surfaces $\mathbb S^2$, $\mathbb S^2_q$ or $N\mathbb S^2_q$ where $q\geq 1$, 
then in the above way we obtain a countable family of chaotic homeomorphism groups $\{\widetilde{\Gamma} (k, m)\cong\mathbb Z\}$ on $M$.

It is well known that finite orbits of every Anosov automorphism of the torus $\mathbb T^2$ form a countable dense subset in $\mathbb T^2$.
However, this is not true for the constructed homeomorphism $g$ in general. In the next section we will construct a homeomorphism $g$ 
having a continuum of fixed points. Hence, in this case, the set of all finite orbits of both groups $\Gamma$ and $\widetilde{\Gamma}$ 
has the cardinality of the continuum.
\end{proof}

\begin{remark} In contrast to \cite{CarDEKP}, we have constructed an infinite countable family of chaotic actions of the group $\mathbb Z$ 
on each closed surface.
\end{remark}

\subsection{Chaotic actions of the group $\mathbb{Z}$ on noncompact\\ two-dimensional manifolds}\label{noncompact}

Consider the toral linked twist map $g = g(k, m): R\to R$ constructed in the previous section. Recall that $R$ is the union of two annuli
$P \cup Q$ on the torus $\mathbb{T}^2.$ Let $p: R\to\mathbb{B}^2$ be the projection satisfying the commutative diagram \eqref{EC}, hence 
$p(\partial R) = \partial\mathbb{B}^2\cong\mathbb{S}^1.$ 

At first we will pick out some point $z\in\partial\mathbb{B}^2.$ Identify $\partial\mathbb{B}^2\setminus \{z\}$ with the real line
$\mathbb{R}^1 = \bigcup_{n=-\infty}^{+\infty}[n, n+1]$. Represent $\mathbb{B}^2\setminus \{z\}$ as an polygon without a point $z$, the 
boundary of which is divided into a countable set of pairwise glued sides. Consider the following gluing rules.

{\it Case IV.} $[n, n+1] \sim a_n b_n a_n^{-1} b_n^{-1}\,\,\,\, \forall n\in\mathbb Z$. As the result of this gluing, we will get a noncompact 
two-dimensional manifold $M$ without boundary, homeomorphic to a plane with a countable family of handles which we denote by $\mathbb R^2_{\infty}$. 
Emphasize that $M$ is homeomorphic to the Loch Ness monster (see Figure~\ref{Loch_ness_monster}).

{\it Case V.} $[n, n+1] \sim a_n a_n\,\,\,\, \forall n\in\mathbb Z$. As the result of this gluing, we will get a noncompact two-dimensional manifold $M$ without
boundary, homeomorphic to a plane with a countable family of Mobius bands which we denote by $N\mathbb R^2_{\infty}$.

{\it Case VI.} Let us glue $p(\partial R) = \partial\mathbb{B}^2\cong\mathbb{S}^1$ to a segment $L_0$ as in Case III. As the result we get a topological sphere 
$\mathbb S^2$. As above, let $k: \mathbb B^2 \to \mathbb S^2$ be the respective quotient map. The image $k(\partial \mathbb B^2)$ of $\partial \mathbb B^2$ we denote by $L_0 \subset \mathbb S^2$. Consider an arbitrary proper compact subset $K_0$ of $L_0$ and a point $z_0\in L_0\setminus K_0$. We get a noncompact two-dimensional manifold denoted by $\mathbb R^2(K_0) := \mathbb S^2 \setminus (K_0\cup\{z_0\})$ homeomorphic to a plane without $K_0$. In Figure~\ref{Cantor_tree} you can see the noncompact surface $\mathbb R^2(K_0)$ where $K_0$ is the standard Cantor set.

Now we consider the restriction of $g: R\to R$, constructed in the previous section, to a circle $C = \{(x, 0)\,|\, -1/2\leq x\leq 1/2\}.$
The image $g(C)$ is a closed curve on the torus $\mathbb{T}^2$. Observe that 
$$g(x, 0) = (x, 0)\,\,\,\, \forall x\in [-1/2, -1/m] \cup [1/m, 1/2].$$
Emphasize that $[-1/2, -1/m] \cup [1/m, 1/2]$ is a connected segment in $R \subset \mathbb T^2$. The set $L = p([-1/2, -1/m] \cup [1/m, 1/2])$ 
is a connected topological 
segment in the inside of the disk $\mathbb{B}^2$ highlighted in red in Figure~\ref{Pillow2_b}, and $g|_L = \id_L.$ Assume that the boundary $\partial\mathbb{B}^2$ of $\mathbb{B}^2$ 
is glued in one of the ways specified above in Cases I -- VI. The result is a manifold $M$ where $M$ is either one of arbitrary closed surfaces 
$\mathbb S^2$, $\mathbb S^2_q$, $N\mathbb S^2_q$ where $q\geq 1$, or one of noncompact surfaces $\mathbb R^2_{\infty}$, $N\mathbb R^2_{\infty}$, 
$\mathbb R^2(K_0)$. Let $k:\mathbb{B}^2\to M$ be the corresponding quotient map. As no points from $L$ were not glued together, we identify the image 
$k(L)$ of $L$ in $M$ with $L$ and consider $L$ as subset of $M$. This fact allows us to remove an arbitrary closed subset $K$ of $L$ from $L$ and to 
obtain a noncompact two-dimensional topological manifold $\widehat{M} = \widehat{M}(K) := M\setminus K.$ Emphasize that we do not exclude the 
possibility $K = \varnothing.$ The homeomorphism $\widetilde{g} = \widetilde{g}(k, m)$ induces a chaotic homeomorphism $\widehat{g} = \widehat{g}(k, m)$ 
of $\widehat{M}$, and the group $\widehat{\Gamma} = \langle \widehat{g} \rangle$ has a dense set periodic orbits in $\widehat{M}$.

Indicate some important classes of obtained manifolds $\widehat{M} = \widehat{M}(K).$ 

If $K$ is a finite subset of $L$, then $\widehat{M}$ is a noncompact manifold with a finite number of ends.

If $K$ is an infinite subset of $L$ with one limit point $z',$ then $z'\in K$ and we get a noncompact manifold 
$\widehat{M} = M\setminus K$ with a discrete countable set of ends.

When $K$ is the standard Cantor set on $L$, we get a noncompact manifold $\widehat{M}$ containing the Cantor set of ends.
It is possible that $\widehat{M}$ has other ends obtained by removing a compact subset $K_0 \subset L_0$ where $L_0 \subset k(\partial \mathbb B^2)$ similarly to Case~{VI}.

Emphasize that for every closed subset $K$, the set of handles or Mobius bands on $\widehat{M}$ may be countable.

In Case VI, for $M = \mathbb R^2(K_0)$ we get $\widehat{M} = \widehat{M}(K_0, K)$. When $K_0 = \varnothing$ and $K$ is the standard Cantor set, 
the surface $\widehat{M}(K_0, K)$ is homeomorphic to the surface in Figure~\ref{Cantor_tree}. 

On every such constructing noncompact topological two-dimensional manifold $\widehat{M}$ the homeomorphism $\widetilde{g} = \widetilde{g}(k, m)$ 
induces a chaotic homeomorphism $\widehat{g} = \widehat{g}(k, m)$, and the group $\widehat{\Gamma} = \langle \widehat{g} \rangle$ has a dense set 
periodic orbits in $\widehat{M}$.

Thus we get an infinite countable family of chaotic groups
$\{\widehat{\Gamma} (k, m) = \langle \widehat{g}(k, m)\rangle\}$ on each surface $\widehat{M}$ constructed above.

\subsection{Chaotic groups acting on closed $n$-dimensional manifold}\label{sk}
Let $G$ be any countably generated free group. In \cite{CarK} G. Cairns, A. Kolganova and A.~Nielsen showed that every compact triangulable manifold $M$ 
of an arbitrary dimension $n$ greater than one admits a faithful chaotic action of the group $G$. 

Emphasize that as follows from \cite{CarK}, the free group $\Gamma = \langle g_1, g_2\rangle$ may be implemented as a chaotic group of 
homeomorphisms of every such manifold $M$.

\subsection{Chaotic groups of homeomorphisms on products of manifolds}\label{sk1}

Since every open subset of a Polish space is also Polish, then all topological manifolds defined above in Sections~\ref{S9} are locally compact
Polish spaces, hence they satisfies conditions of Theorems~\ref{Sensation2} -- \ref{ThC1}. For an arbitrary index set $A$ we use the constructed above 
chaotic actions of homeomorphism groups $G_\alpha$, $\alpha\in A,$ on topological manifolds $M_\alpha$ as building blocks for constructions of 
chaotic canonical action of the product of groups $G = \prod_{\alpha\in A} G_\alpha$ on $M = \prod_{\alpha\in A} M_\alpha$. 

According to Theorem~\ref{TC2}, if $G_i$, $i\in J\subset\mathbb N,$ is a countable chaotic group of homeomorphisms of $n_i$-dimensional closed 
triangulable manifold $M_i$, $n_i \geq 2,$ then the canonical action of the direct product of groups $G = \prod_{i\in J}G_i$ is chaotic on 
the product $M = \prod_{i\in J}M_i$. Here $M$ is a infinite-dimensional topological manifold, if the set $J$ is infinite countable, otherwise $M$ is a 
finite-dimensional topological manifold. In both cases, by Theorem~\ref{TC2}, every groups $G$ and $G_i$ have sensitivity to initial conditions.
Emphasize that the dimension of $M_i$ may be an arbitrary greater than one. 

Moreover, in the case $J = \mathbb N,$ according to Theorem~\ref{TC2}, there exists a dense subset $F \subset M$ which is the union of compact continuum
orbits of the group $G$, and every such orbit is a perfect subset of $M$. Besides there exists a dense continuum orbit of $G$ in $M$. 

If, moreover, every $M_i$ is a two-dimensional manifold with a chaotic action of the group $G_i\cong\mathbb Z$ constructed in 
Sections~\ref{ssur} -- \ref{noncompact}, then every group $G_i$ has finite orbits and, in particular, a fixed point. Therefore, 
in this case, the group $G = \prod_{i\in\mathbb N}G_i$ on the infinite-dimensional topological manifold $M = \prod_{i\in J}M_i$ has 
a continuum set of finite orbits, and the union of finite orbits is dense in $M$. Therefore every groups $G_i$ and $G$ are chaotic in the sense of both 
Definition~\ref{Chaos1} and Definition~\ref{Chaos2}.

\begin{example} Consider the standard two-dimensional torus
$\mathbb{T}^2 = \mathbb{R}^2/\mathbb{Z}^2$. Anosov torus automorphism $\mathbb{T}^2,$ given by a matrix
$A \in SL(2, \mathbb Z),$ is denoted by $g_A$. As is well known \cite{ZhR}, every matrix \begin{equation}\label{An}
A=\left(
\begin{array}{ccc}
a&b\\
c&d
\end{array}
\right),\,\,\, 
\end{equation}
where $ad-bc = 1$ and $a+d > 2$, defines an  Anosov automorphism $g_A$ of the torus $\mathbb T^2$
preserving its orientation . The group $G = \langle g_A\rangle \cong\mathbb Z$ generated by $g_A$ acts chaotically
on $\mathbb T^2$. It is well known that there exists a countable set of finite orbits of $G$, and this set is dense in $\mathbb T^2.$
There exists a countable family ${\mathcal A} = \{A_k\,|\, k\in\mathbb N\}$ such matrices defining Anosov automorphisms.

Consider the infinite-dimensional torus $\mathbb T^\infty = \prod_{i\in\mathbb N}\mathbb T^2_i$ where $\mathbb T^2_i = \mathbb T^2$ for each $i\in\mathbb N.$
Let $g_{A_i}$, $A_i\in{\mathcal A},$ be an  Anosov automorphism on $\mathbb T^2_i$. Let $G_i = \langle g_{A_i}\rangle$ and $G = \prod_{i\in\mathbb N} G_i.$
According to Theorem~\ref{TC2}, we get a chaotic group of homeomorphisms $G$ of the torus $\mathbb T^\infty$. Taking into account that $A_i$ may be an arbitrary
matrix belonging to $\mathcal A,$ we see that different groups $G$ form a continuum set. Emphasize that $G$ also has continuum cardinality. As every group $G_i$ 
has a fixed point, by Theorem~\ref{TC2}, the union of finite orbits of group $G$ is dense in $\mathbb T^\infty$, and $G$ has a fixed point. Moreover, the 
union of its compact orbits of continuum cardinality is also dense in $\mathbb T^\infty$, and every such orbit is a perfect subset of $\mathbb T^\infty$. 
Note that every dense orbit has continuum cardinality. Besides, by Theorem~\ref{TC2}, the group $G$ is sensitive to initial conditions.

All groups $G_i$, $i \in \mathbb N$, and $G$ are chaotic in the sense of Definition~\ref{Chaos1} as well as in the sense of Definition~\ref{Chaos2}.
\end{example}

\section*{Acknowledgements}

This work is supported by Russian Science Foundation, project No~22-21-00304.

\end{document}